\newtheorem{theorem}{Theorem}
\newtheorem{example}{Example}
\newtheorem{lemma}{Lemma}
\newtheorem{corollary}{Corollary}
\newtheorem{remark}{Remark}
\newcommand{\zz}{\mathbf z}
\newcommand{\xx}{\mathbf x}
\newcommand{\yy}{\mathbf y}
\newcommand{\ww}{\mathbf w}
\def\mapright#1{\smash{\mathop{\longrightarrow}\limits^{{#1}}}}
\def\mapdown#1{\Big\downarrow\rlap{$\vcenter{\hbox{$#1$}}$}}
\title{A generalized Join theorem for real analytic singularities}
\author{Kazumasa Inaba} 
\address{Department of Applied Science, Faculty of Science, Okayama University of Science, 
1-1 Ridai-cho, Kita-ku, Okayama-city, Okayama 700-0005, Japan}
\email{k-inaba@ous.ac.jp}
\begin{document}
\renewcommand{\thefootnote}{\fnsymbol{footnote}}
\footnote[0]{2020\textit{ Mathematics Subject Classification}.
Primary 32S55; Secondary: 58K05, 58K10.} 

\footnote[0]{\textit{Key words and phrases}. 
Milnor fibration, $a_{f}$-condition, zeta function of monodromy}

\maketitle


\begin{abstract}
Let $f_{1} : (\Bbb{R}^{n}, \mathbf{0}_{n}) \rightarrow (\Bbb{R}^{2}, \mathbf{0}_{2})$ 
and $f_{2} : (\Bbb{R}^{m}, \mathbf{0}_{m}) \rightarrow (\Bbb{R}^{2}, \mathbf{0}_{2})$ 
be real analytic germs of independent variables, where $n, m \geq 2$. 
Assume that $f_{1}$ and $f_{2}$ satisfy 
$a_{f}$-condition at $\mathbf{0}_{2}$. 
Let $g$ be a $2$-variable strongly non-degenerate mixed polynomial which 
is locally tame along vanishing coordinate subspaces. 
Then a real analytic germ 
$f : (\Bbb{R}^{n} \times \Bbb{R}^{m}, \mathbf{0}_{n+m}) \rightarrow (\Bbb{R}^{2}, \mathbf{0}_{2})$ 
is defined by $f(\xx, \yy) = g(f_{1}(\xx), f_{2}(\yy))$. 

In this paper, we first show the existence of the Milnor fibration of $f$. 
We next show a generalized Join theorem for real analytic singularities. 
By this theorem, the homotopy type of the Milnor fiber of $f$ is determined 
by those of $f_{1}, f_{2}$ and $g$. 
For complex singularities, this theorem was proved by A.~N\'{e}methi. 
\end{abstract}

\section{Introduction}
Let $f_{1} : (\Bbb{C}^{n}, \mathbf{0}_{2n}) \rightarrow (\Bbb{C}, \mathbf{0}_{2})$ and 
$f_{2} : (\Bbb{C}^{m}, \mathbf{0}_{2m}) \rightarrow (\Bbb{C}, \mathbf{0}_{2})$ 
be holomorphic functions of independent variables 
$\zz = (z_{1}, \dots, z_{n})$ and $\ww = (w_{1}, \dots, w_{m})$. 
Here $\mathbf{0}_{2N}$ is the origin of $\Bbb{C}^{N}$. 
Join theorem for complex singularities is the following. 
\begin{theorem}[Join theorem]
Let $f$ be a holomorphic function on a neighborhood of 
the origin of $\Bbb{C}^{n+m}$ such that $f(\zz, \ww) = f_{1}(\zz) + f_{2}(\ww)$. 
Then 
the Milnor fiber of $f$ is homotopy equivalent to the join of the Milnor fibers of $f_1$ and $f_2$ and 
the monodromy of $f$ is equal to the join of the monodromies of $f_1$ and $f_2$ up to homotopy. 
\end{theorem}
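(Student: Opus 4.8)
The plan is to realise the Milnor fibration of $f=f_1+f_2$ as a fibrewise join of the Milnor fibrations of $f_1$ and $f_2$, in the spirit of Sebastiani--Thom but retaining the homotopy type of the fibre (and the monodromy up to homotopy), not merely the homology. First I set up the tube models: for $i=1,2$ I fix $0<\delta_i\ll\epsilon_i\ll1$ so that, with $N_i:=\{|f_i|\le\delta_i\}\cap\overline B_{\epsilon_i}$, the map $f_i\colon f_i^{-1}(\partial\Delta_{\delta_i})\cap N_i\to\partial\Delta_{\delta_i}$ is the Milnor fibration of $f_i$, with Milnor fibre $F_i=f_i^{-1}(\delta_i)\cap N_i$ and geometric monodromy $h_i\colon F_i\to F_i$. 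I shall also use Milnor's lemma that $f_i^{-1}(\mathbf 0_2)\cap\overline B_{\epsilon_i}$ is contractible (a cone on the link) and that $N_i$ deformation retracts onto it by a retraction moving the values of $f_i$ radially to $\mathbf 0_2$. Since $f$ is a holomorphic germ it carries a Milnor fibration, and by the independence of the Milnor fibre from the chosen regular neighbourhood I take as its Milnor fibre $F:=f^{-1}(\delta)\cap(N_1\times N_2)$, with the associated tube fibration over $\partial\Delta_{|\delta|}$, for $0<|\delta|\ll\min(\delta_1,\delta_2)$.

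For the homotopy type I use the continuous function
\[
\tau\colon F\longrightarrow[0,1],\qquad \tau(\xx,\yy)=\frac{|f_1(\xx)|}{|f_1(\xx)|+|f_2(\yy)|},
\]
well defined since $f_1(\xx)+f_2(\yy)=\delta\neq\mathbf 0_2$. Writing $s=f_1(\xx)$ (so $f_2(\yy)=\delta-s$), one has $\tau^{-1}([0,\tfrac12])=\{(\xx,\yy)\in F:s\in H\}$ with $H=\{s:|s|\le|\delta-s|\}$ a closed half-plane containing $\mathbf 0_2$ but not $\delta$; projecting to $\xx$ presents this set as a fibre bundle with fibre $F_2$ over $W:=f_1^{-1}(\Omega)\cap\overline B_{\epsilon_1}$, where $\Omega$ is a region star-shaped about $\mathbf 0_2$ (an intersection of $H$, $\overline{\Delta}_{\delta_1}$ and a disk about $\delta$). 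The retraction of $N_1$ mentioned above preserves $f_1^{-1}(\Omega)$ and hence shows $W$ is contractible; and the bundle, being pulled back via $\xx\mapsto\delta-f_1(\xx)$ from a contractible region of $\overline{\Delta}_{\delta_2}\setminus\{\mathbf 0_2\}$, is trivial. Thus $\tau^{-1}([0,\tfrac12])\simeq F_2$, and symmetrically $\tau^{-1}([\tfrac12,1])\simeq F_1$, while $\tau^{-1}(\tfrac12)\cong F_1\times F_2$ with its inclusions into the two halves corresponding, under these equivalences, to $\mathrm{pr}_2$ and $\mathrm{pr}_1$. Hence $F$ is the homotopy pushout of $F_1\xleftarrow{\mathrm{pr}_1}F_1\times F_2\xrightarrow{\mathrm{pr}_2}F_2$, i.e.\ the double mapping cylinder of the two projections, which is the join: $F\simeq F_1*F_2$.

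For the monodromy I run the same analysis over the whole base circle, i.e.\ I show that the tube fibration of $f$ over $\partial\Delta_{|\delta|}$ is fibre-homotopy equivalent to the fibrewise join of the tube fibrations of $f_1$ and $f_2$. As the value of $f$ traverses $\partial\Delta_{|\delta|}$ once, the half-plane $H$ rotates with it while the retractions above are chosen to vary continuously; the resulting transport acts on $\tau^{-1}([0,\tfrac12])\simeq F_2$ only through the $\yy$-coordinate, where it is the monodromy $h_2$, on $\tau^{-1}([\tfrac12,1])\simeq F_1$ as $h_1$, and on $\tau^{-1}(\tfrac12)\cong F_1\times F_2$ as $h_1\times h_2$, all compatibly along the overlaps. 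Therefore the monodromy of $f$ is, up to homotopy, the join $h_1*h_2\colon F_1*F_2\to F_1*F_2$, $(x,y,u)\mapsto(h_1(x),h_2(y),u)$.

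The hard part is exactly this last simultaneity: one must perform the deformation retractions of $\tau^{-1}([0,\tfrac12])$ and $\tau^{-1}([\tfrac12,1])$ and glue them along $\tau^{-1}(\tfrac12)$ \emph{uniformly in the base parameter}, producing a single fibre-homotopy equivalence over the base circle so that the monodromy statement — not merely the statement about the fibre — follows; this forces a careful choice of the conical structures of $N_1$ and $N_2$ (equivalently, of Thom $a_{f_i}$-stratifications) and a check that the ``middle'' region $\tau\approx\tfrac12$ contributes nothing beyond $F_1\times F_2$. In the holomorphic case these transversality and triviality properties hold automatically; in the real-analytic generalisation treated in the rest of the paper this is precisely where the hypotheses that $f_1,f_2$ satisfy the $a_f$-condition and that $g$ is strongly non-degenerate and locally tame along vanishing coordinate subspaces enter.
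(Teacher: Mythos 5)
The paper does not actually prove this statement: it is quoted as background, with the proof attributed to Sebastiani--Thom (isolated case, algebraic), Oka (weighted homogeneous) and Sakamoto (general case), so there is no in-paper argument to compare against line by line. That said, your proposal is essentially Sakamoto's geometric proof, and it is also exactly what the paper's own machinery for the generalized theorem gives when specialized to $g(z_1,z_2)=z_1+z_2$: there $g^{-1}(\tilde{\delta})$ is contractible ($b_g=\{\ast\}$), it meets $\{z_1z_2=0\}$ in the two points $(0,\delta)$ and $(\delta,0)$ (so $n_1=n_2=1$), and Theorem~3 says $F_f$ is obtained from $(f_1,f_2)^{-1}(\ast)\cong F_1\times F_2$ by coning off along the two projections onto $V(f_1)\times F_2\simeq F_2$ and $F_1\times V(f_2)\simeq F_1$, i.e.\ the double mapping cylinder $F_1*F_2$. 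Your partition of $F$ by $\tau$ is the pullback under $(f_1,f_2)$ of the corresponding partition of the line $\{z_1+z_2=\delta\}$ into a half containing $(0,\delta)$, a half containing $(\delta,0)$, and the separating segment, so the two decompositions coincide. The argument is correct modulo the standard technical points you already flag: one needs the two pieces to form an excisive couple with cofibrant intersection so that the strict pushout computes the homotopy pushout, and for the monodromy one needs all retractions and trivializations to be chosen fibrewise over $\partial\Delta_{|\delta|}$ (this is where the Thom $a_f$/transversality input is used, which holds automatically for holomorphic germs). Your approach buys a self-contained proof of the classical case; the paper's route buys the generalization to $f=g(f_1,f_2)$ with nontrivial $g$, where $b_g$ is no longer a point and extra copies of $\tilde{F}_1,\tilde{F}_2$ appear.
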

Join theorem is algebraically proved by M. Sebastiani and R. Thom for isolated singularities~\cite{ST}. 
So Join theorem is often called Thom--Sebastiani theorem. 
M. Oka showed this for weighted homogeneous singularities \cite{O-1}. 
For general complex singularities, this is proved by K.~Sakamoto \cite{S2}. 


Let $g : (\Bbb{R}^{N}, \mathbf{0}_{N}) \rightarrow (\Bbb{R}^{p}, \mathbf{0}_{p})$
be a real analytic germ, 
where $N \geq p \geq 2, \mathbf{0}_{N}$ 
and $\mathbf{0}_{p}$ are the origins of $\Bbb{R}^{N}$ and $\Bbb{R}^{p}$ respectively. 
In general, real analytic singularities does not admit Milnor fibrations. 
To show the existence of the Milnor fibration of $g$, 
we assume that $g$ satisfies the following conditions.  
Set $V(g) = g^{-1}(\mathbf{0}_{p}) \cap B^{N}_{\varepsilon}$, 
where $B^{N}_{\varepsilon} = \{\xx \in \Bbb{R}^{N} \mid \| \xx \| \leq \varepsilon \}$. 
In this paper, $B^{N}_{\varepsilon}$ is used for the disk in the defining euclidean space. 
A real analytic germ $g : (\Bbb{R}^{N}, \mathbf{0}_{N}) \rightarrow (\Bbb{R}^{p}, \mathbf{0}_{p})$ is 
\textit{locally surjective near the origin} if there exists a positive real number $\varepsilon$ 
so that for any $\mathbf{x} \in V(g)$, 
there exists an open neighborhood $W$ of $\mathbf{x}$ so that $\mathbf{0}_{p}$ is an 
interior point of the image $g(W)$. 
We also assume that $V(g)$ has codimension $p$ at the origin. 
Let $\varepsilon$ be a small positive real number 
and $\mathcal{S}$ be a stratification of $V(g)$. 
The map $g$ satisfies \textit{$a_{f}$-condition}
if 
$B^{N}_{\varepsilon} \setminus V(g)$ 
has no critical point and satisfies the following condition: 
For any sequence $p_{\nu} \in B^{N}_{\varepsilon} \setminus V(g)$ 
such that 
\[
T_{p_{\nu}}g^{-1}(g(p_{\nu})) \rightarrow \tau, \ \ p_{\nu} \rightarrow p_{\infty} \in M, 
\]
where $M \in \mathcal{S}$, 
we have $T_{p_{\infty}}M \subset \tau$. 
We say $\varepsilon$ is \textit{an $a_{f}$-stable radius for $g$ with respect to $\mathcal{S}$} 
if it satisfies the following: 
Each sphere $S^{N-1}_{\varepsilon'}, 0 < \varepsilon' \leq \varepsilon$ intersects transversely with 
any stratum of $\mathcal{S}$ 
and $\mathbf{0}_{p}$ is the only critical value of 
$g|_{B^{N}_{\varepsilon}} : B^{N}_{\varepsilon} \rightarrow \Bbb{R}^{p}$. 

Since $V(g)$ is a real analytic set, we may assume that 
$\mathcal{S}$ is a Whitney stratification. 
See \cite{H} for further information. 
We take $\varepsilon$ sufficiently small if necessary. 
Then the sphere $\partial B^{N}_{\varepsilon}$ 
intersects $M$ 
transversely for any $M \in \mathcal{S}$. 
See \cite[Corollary 2.9]{M2} and the proof of \cite[Lemma 3.2]{BV}. 
Take an $a_{f}$-stable radius $\varepsilon$ for $g$ and take a sufficiently small $\delta$, 
$0 < \delta \ll \varepsilon$ so that $g^{-1}(\eta)$ intersects transversely with the sphere of radius $\varepsilon$. 
See \cite[Proposition 11]{O2}. 
By the above conditions and the Ehresmann fibration theorem \cite{W}, we may assume that 
$\delta \ll \varepsilon$ and 
\[
g : B^{N}_{\varepsilon} \cap g^{-1}(D^{p}_{\delta} \setminus \{\mathbf{0}_{p}\}) 
\rightarrow D^{p}_{\delta} \setminus \{\mathbf{0}_{p}\} 
\]
is a locally trivial fibration, 
where 
$D^{p}_{\delta} = \{\ww \in \Bbb{R}^{p} \mid \| \ww \| \leq \delta \}$. 
The isomorphism class of the above fibration does not depend on the choice of $\varepsilon$ and $\delta$. 
We call this fibration 
\textit{stable tubular Milnor fibrations of $g$}. 



Let $f_{1} : (\Bbb{R}^{n}, \mathbf{0}_{n}) \rightarrow (\Bbb{R}^{p}, \mathbf{0}_{p})$ 
and $f_{2} : (\Bbb{R}^{m}, \mathbf{0}_{m}) \rightarrow (\Bbb{R}^{p}, \mathbf{0}_{p})$ 
be analytic germs, 
where $n, m \geq p \geq 2$. 
Set $V(f_{1}) = f_{1}^{-1}(\mathbf{0}_{p}) \cap B^{n}_{\varepsilon}$ and 
$V(f_{2}) = f_{2}^{-1}(\mathbf{0}_{p}) \cap B^{m}_{\varepsilon}$
for $0 < \varepsilon \ll 1$. 
We denote a stratification of 
$V(f_{1})$ (resp. $V(f_{2})$) 
by $\mathcal{S}_{1}$ (resp. $\mathcal{S}_{2}$). 
Assume that $f_{1}$ and $f_{2}$ satisfy the following conditions: 
\renewcommand{\theenumi}{\roman{enumi}}
\begin{enumerate}[({a}-i)]
\item
$f_{j}$ has an isolated value at the origin, 
$\text{codim}_{\Bbb{R}} V(f_{j}) = p$ 
and $f_{j}$ is locally surjective on $V(f_{j})$ near the origin for $j = 1, 2$, 
\item
$f_{j}$ satisfies $a_{f}$-condition with respect to $\mathcal{S}_{j}$ for $j = 1, 2$. 
\end{enumerate}
\noindent
Take a common $a_{f}$-stable radius $\varepsilon$ for $f_{1}$ and $f_{2}$ and take a sufficiently small $\delta$, 
$0 < \delta \ll \varepsilon$ so that 
$f_{j}^{-1}(\eta)$ intersects transversely with the sphere of radius $\varepsilon$ 
for $j = 1, 2$ and $\lvert \eta \rvert \leq \delta$. 
Set $U_{j}(\varepsilon, \delta) = \{ \xx \in B^{n_{j}}_{\varepsilon} \mid \| f_{j}(\xx)\| \leq \delta \}$ 
with $n_{1} = n$ and $n_{2} = m$. 
By the above conditions and the Ehresmann fibration theorem \cite{W}, we may assume that 
\[
f_{j} : U_{j}(\varepsilon, \delta) \setminus V(f_{j}) \rightarrow D^{p}_{\delta} \setminus \{\mathbf{0}_{p}\}
\]
is a locally trivial fibration for $j = 1, 2$. 
Put $V(f_{1} + f_{2}) = (f_{1} + f_{2})^{-1}(0) \cap (U_{1}(\varepsilon, \delta) \times U_{2}(\varepsilon, \delta))$. 
By \cite[Proposition 5.2]{ACT}, $f_{1} + f_{2}$ also satisfies the conditions (a-i) and (a-ii). 
In \cite{In1}, 
the fiber of the tubular Milnor fibration of $f_{1} + f_{2}$ is homotopy equivalent to 
the join of the fibers of the tubular Milnor fibrations of $f_1$ and $f_2$. 
Moreover, if $p = 2$, the monodromy of the tubular Milnor fibration of $f_{1} + f_{2}$ is equal to 
the join of the monodromies of $f_1$ and $f_2$ up to homotopy. 
L.~H.~Kauffman and W. D. Neumann studied fiber structures and Seifert forms of 
links defined by tame isolated singularities of real analytic germs of independent variables \cite{KN}. 
For mixed weighted homogeneous singularities, 
Join theorem is proved by J. L. Cisneros-Molina~\cite{C}. 


A.~N\'{e}methi studied a generalized Join theorem for complex analytic singularities. 
Let $f_{1} : (\Bbb{C}^{n}, \mathbf{0}_{2n}) \rightarrow (\Bbb{C}, \mathbf{0}_{2}), 
f_{2} : (\Bbb{C}^{m}, \mathbf{0}_{2m}) \rightarrow (\Bbb{C}, \mathbf{0}_{2})$ 
and $g : (\Bbb{C}^{2}, \mathbf{0}_{4}) \rightarrow (\Bbb{C}, \mathbf{0}_{2})$ 
be complex analytic germs of independent variables. 
Then a complex analytic germ 
$f : (\Bbb{C}^{n} \times \Bbb{C}^{m}, \mathbf{0}_{2n+2m}) \rightarrow (\Bbb{C}, \mathbf{0}_{2})$ 
is defined by $f(\xx, \yy) = g(f_{1}(\xx), f_{2}(\yy))$. 
Let $F_{1}, F_{2}$ and $F_{g}$ be the Milnor fibers of $f_{1}, f_{2}$ and $g$ respectively. 
We denote the total space of a fiber bundle with base space $F_{g}$ and fiber $F_{1} \times F_{2}$ 
by $F'$. 
Set $\tilde{F}_{1} = V(f_{1}) \times F_{2}$ and $\tilde{F}_{2} = F_{1} \times V(f_{2})$. 
N\'{e}methi showed that 
the Milnor fiber of $f$ has the homotopy type of a space obtained from $F'$ by gluing to 
a fiber $F_{1} \times F_{2}$ $n_{1}$ copies of $\tilde{F}_{1}$ and $n_{2}$ copies of 
$\tilde{F}_{2}$, where 
$n_{1}$ is the number of $\{ (0, z_{2}) \in B^{2m}_{\varepsilon} \cap g^{-1}(\delta) \}$ and 
$n_{2}$ is the number of $\{ (z_{1}, 0) \in B^{2n}_{\varepsilon} \cap g^{-1}(\delta) \}$.


To study a generalization of Join theorem for real analytic singularities, 
we consider strongly non-degenerated mixed functions. 
Let $(g_{1}, g_{2}) : (\Bbb{R}^{2n}, \mathbf{0}_{2n}) \rightarrow (\Bbb{R}^{2}, \mathbf{0}_{2})$ 
be an analytic map germ 
with real $2n$-variables $x_{1}, \dots, x_{n}$ and $y_{1}, \dots, y_{n}$. 
Then $(g_{1}, g_{2})$ is represented by 
a complex-valued function of variables $\zz = (z_{1}, \dots, z_{n})$ and 
$\bar{\zz} = (\bar{z}_{1}, \dots, \bar{z}_{n})$ as 
\[ 
g(\zz, \bar{\zz}) := 
g_{1}\Bigl(\frac{\zz + \bar{\zz}}{2}, \frac{\zz - \bar{\zz}}{2\sqrt{-1}}\Bigr) 
+\sqrt{-1}g_{2}\Bigl(\frac{\zz + \bar{\zz}}{2}, \frac{\zz - \bar{\zz}}{2\sqrt{-1}}\Bigr). 
\]
Here any complex variable $z_{j}$ of $\Bbb{C}^{n}$ is represented by 
$x_{j} + \sqrt{-1}y_{j}$ and $\bar{z}_{j}$ is the complex conjugate of $z_j$ for $j = 1, \dots, n$. 
Then a map $g : (\Bbb{C}^{n}, \mathbf{0}_{2n}) \rightarrow (\Bbb{C}, \mathbf{0}_{2})$ 
is called a \textit{mixed function map}. 
Oka introduced the notion of Newton boundaries of mixed functions and 
the concept of strong non-degeneracy. 
If $g$ is a convenient strongly non-degenerate mixed function or 
a strongly non-degenerate mixed function which 
is locally tame along vanishing coordinate subspaces, 
then $g$ satisfies the conditions (a-i) and (a-ii). 
See \cite{O1, O2}.

Assume that 
$f_{1} : (\Bbb{R}^{n}, \mathbf{0}_{n}) \rightarrow (\Bbb{R}^{2}, \mathbf{0}_{2})$ 
and $f_{2} : (\Bbb{R}^{m}, \mathbf{0}_{m}) \rightarrow (\Bbb{R}^{2}, \mathbf{0}_{2})$ 
satisfy the conditions (a-i) and (a-ii). 
Let $g$ be a $2$-variable strongly non-degenerate mixed polynomial which 
is locally tame along vanishing coordinate subspaces. 
Then a real analytic germ 
$f : (\Bbb{R}^{n} \times \Bbb{R}^{m}, \mathbf{0}_{n+m}) \rightarrow (\Bbb{R}^{2}, \mathbf{0}_{2})$ 
is defined by $f(\xx, \yy) = g(f_{1}(\xx), f_{2}(\yy))$. 
In general, $f$ is not strongly non-degenerate. 
To show the existence of the Milnor fibration of $f$, 
we need to prove that $f$ satisfies $a_{f}$-condition. 
Set $V(f) = f^{-1}(\mathbf{0}_{2}) \cap (U_{1}(\varepsilon, \delta) \times U_{2}(\varepsilon, \delta))$. 
We take the stratification $\mathcal{S}_{f}$ of 
$V(f)$ as follows: 
\[
\mathcal{S}_{f} : (\mathcal{S}_{1} \times \mathcal{S}_{2}) \sqcup \mathcal{S}' 
\sqcup 
\Bigl( V(f) \setminus (\mathcal{S}_{1} \times \mathcal{S}_{2} \sqcup \mathcal{S}')\Bigr), 
\]
where $\mathcal{S}' = \{ \mathcal{S}_{j} \times (U_{(j+1)\bmod 2}(\varepsilon, \delta) 
\setminus V(f_{(j+1)\bmod 2} \mid 
\{j\} \in \mathcal{I}_{v}(g), j = 1, 2 \}$. 
By using $\mathcal{S}_{f}$, we can show the following theorem. 

\begin{theorem}\label{a_f}
Let $f_{1} : (\Bbb{R}^{n}, \mathbf{0}_{n}) \rightarrow (\Bbb{R}^{2}, \mathbf{0}_{2})$ 
and $f_{2} : (\Bbb{R}^{m}, \mathbf{0}_{m}) \rightarrow (\Bbb{R}^{2}, \mathbf{0}_{2})$ 
be real analytic germs of independent variables $\xx = (x_{1}, \dots, x_{n})$ 
and $\yy = (y_{1}, \dots, y_{m})$, where $n, m \geq 2$. 
Assume that $f_{1}$ and $f_{2}$ satisfy the conditions (a-i) and (a-ii). 
Let $g$ be a $2$-variable strongly non-degenerate mixed polynomial which 
is locally tame along vanishing coordinate subspaces. 
Then the real analytic germ 
$f = g \circ (f_{1}, f_{2})$ 
satisfies $a_{f}$-condition with respect to $\mathcal{S}_{f}$. 
\end{theorem}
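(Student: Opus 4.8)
The plan is to verify the $a_{f}$-condition stratum by stratum, using the product structure of $f = g \circ (f_{1}, f_{2})$ and the assumption that $f_{1}, f_{2}$ already satisfy $a_{f}$-condition with respect to $\mathcal{S}_{1}, \mathcal{S}_{2}$. First I would set up coordinates: for a point $(\xx,\yy)$ away from $V(f)$, the fiber $f^{-1}(f(\xx,\yy))$ has tangent space that can be described in terms of the differentials $d(f_{1})_{\xx}$, $d(f_{2})_{\yy}$ and the mixed differentials $\partial g$, $\bar\partial g$ of $g$ at $(f_{1}(\xx), f_{2}(\yy))$. Concretely, if $\zz = f_{1}(\xx)$ and $\ww = f_{2}(\yy)$ lie in $\Bbb{R}^{2} = \Bbb{C}$, the tangent space $T_{(\xx,\yy)}f^{-1}(f(\xx,\yy))$ splits, up to the kernel of the composed differential, according to whether $(\xx,\yy)$ approaches a stratum of the form $\mathcal{S}_{1}\times\mathcal{S}_{2}$, a stratum in $\mathcal{S}'$, or the top-dimensional open stratum. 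The key observation, drawn from Oka's work on strongly non-degenerate mixed functions that are locally tame along vanishing coordinate subspaces, is that near the coordinate axes $\{z_{1}=0\}$ and $\{z_{2}=0\}$ the differential of $g$ is controlled, so the limiting behavior of the fibers of $f$ is governed by the limiting behavior of the fibers of $f_{1}$ and $f_{2}$ separately.

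The main steps I would carry out are the following. (1) Take a sequence $p_{\nu} = (\xx_{\nu},\yy_{\nu}) \in (U_{1}\times U_{2})\setminus V(f)$ with $T_{p_{\nu}}f^{-1}(f(p_{\nu})) \to \tau$ and $p_{\nu}\to p_{\infty}\in M$ for some $M\in\mathcal{S}_{f}$; I must show $T_{p_{\infty}}M\subset\tau$. (2) Distinguish cases on which type of stratum $M$ is. If $M\subset\mathcal{S}_{1}\times\mathcal{S}_{2}$, then both $f_{1}(\xx_{\nu})\to\mathbf{0}_{2}$ and $f_{2}(\yy_{\nu})\to\mathbf{0}_{2}$; I would pass to subsequences so that the fiber tangent spaces $T_{\xx_{\nu}}f_{1}^{-1}(f_{1}(\xx_{\nu}))\to\tau_{1}$ and $T_{\yy_{\nu}}f_{2}^{-1}(f_{2}(\yy_{\nu}))\to\tau_{2}$ converge, invoke the $a_{f}$-condition for $f_{1}$ and $f_{2}$ to get $T_{\xx_{\infty}}M_{1}\subset\tau_{1}$, $T_{\yy_{\infty}}M_{2}\subset\tau_{2}$ where $M = M_{1}\times M_{2}$, and then argue that $\tau_{1}\times\tau_{2}\subset\tau$ because the fiber of $f$ contains (in the limit) the product of the fibers of $f_{1}$ and $f_{2}$ over the relevant values of $g$. (3) For a stratum $M\in\mathcal{S}'$, say $M = M_{1}\times(U_{2}\setminus V(f_{2}))$ with $\{1\}\in\mathcal{I}_{v}(g)$, the second factor stays away from $V(f_{2})$, so $\yy_{\infty}\notin V(f_{2})$; here the local tameness of $g$ along the vanishing coordinate subspace $\{z_{1}=0\}$ is exactly what guarantees that the composed map still has no critical points near $M$ and that $T_{p_{\infty}}M$ is captured by $\tau$. (4) For the top open stratum of $V(f)$ the inclusion is automatic by dimension once one knows $B\setminus V(f)$ has no critical points near it, which follows from non-degeneracy of $g$ combined with the fibration structure of $f_{1}, f_{2}$.

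The main obstacle I anticipate is Step (2)–(3): controlling the limit $\tau$ of the tangent spaces to fibers of the \emph{composed} map $f = g\circ(f_{1},f_{2})$ in terms of the limits $\tau_{1},\tau_{2}$ for $f_{1},f_{2}$. The difficulty is that the chain rule introduces the mixed Jacobian of $g$ at $(f_{1}(\xx_{\nu}),f_{2}(\yy_{\nu}))$, and along sequences approaching a coordinate subspace this Jacobian may degenerate; one must show that whatever degeneration occurs is compatible with, rather than destroys, the containment $T_{p_{\infty}}M\subset\tau$. This is precisely where the hypothesis that $g$ is strongly non-degenerate and locally tame along vanishing coordinate subspaces enters in an essential way — it provides a lower bound on the relevant directional derivatives of $g$ near the axes (Oka's results \cite{O1,O2}), so that the kernel of $d(g\circ(f_{1},f_{2}))$ cannot collapse more than the kernels of $df_{1}$ and $df_{2}$ do. I would isolate this as the central lemma and prove it by a careful curve-selection / polar-coordinate estimate on the Newton boundary of $g$. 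The remaining verification, that each sphere $S_{\varepsilon'}^{n+m-1}$ meets the strata of $\mathcal{S}_{f}$ transversely and that $\mathbf{0}_{2}$ is the only critical value on the chosen ball, follows routinely from the corresponding properties for $f_{1}, f_{2}$ and $g$ together with the transversality of $f_{j}^{-1}(\eta)$ with spheres of radius $\varepsilon$ guaranteed by the choice of $a_{f}$-stable radius.
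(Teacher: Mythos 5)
Your proposal is correct and follows essentially the same route as the paper: a case analysis over the three types of strata of $\mathcal{S}_{f}$, with the product strata handled by the $a_{f}$-conditions of $f_{1}$ and $f_{2}$ together with the containment $f^{-1}(f(\xx,\yy)) \supset f_{1}^{-1}(f_{1}(\xx)) \times f_{2}^{-1}(f_{2}(\yy))$, and the strata in $\mathcal{S}'$ handled by the controlled degeneration of $dg$ near the vanishing coordinate axes. The only real difference is one of sourcing: where you propose to prove the central estimate on $dg$ near $\{z_{j}=0\}$ from scratch via a Newton-boundary/polar-coordinate argument, the paper works along an analytic curve supplied by the Curve Selection Lemma (rather than your sequence/subsequence argument) and simply invokes Oka's theorem that $g$ satisfies the $a_{f}$-condition with respect to $\mathcal{S}_{can}$ to obtain two normal vectors to the fibers of $g$ whose leading coefficients form a rank-two matrix and vanish in the $z_{2}$-directions, then pushes these through the chain rule.
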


By Theorem \ref{a_f}, we can show that $f$ admits the Milnor fibration. 
So we can study N\'{e}methi' theorem for $f$. 

\begin{theorem}
Let $b_{g} \subset \Bbb{C}^{2}$ be a bouquet of circles with base point $\ast$. 
Assume that $b_{g}$ is homotopy equivalent to the Milnor fiber of $g$ and 
$b_{g} \cap \{ z_{1}z_{2} = 0\} = \emptyset$. 
Set $\tilde{F}_{1} = V(f_{1}) \times F_{2}$ and $\tilde{F}_{2} = F_{1} \times V(f_{2})$. 
Then the Milnor fiber $F_{f}$ of $f$ is homotopy equivalent to a space obtained from $(f_{1}, f_{2})^{-1}(b_{g})$ 
by gluing to $(f_{1}, f_{2})^{-1}(\ast)$ $n_{1}$ copies of $\tilde{F}_{1}$ and $n_{2}$ copies of 
$\tilde{F}_{2}$, where 
$n_{1}$ is the number of $\{ (0, z_{2}) \in B^{m}_{\varepsilon} \cap g^{-1}(\delta) \}$ and 
$n_{2}$ is the number of $\{ (z_{1}, 0) \in B^{n}_{\varepsilon} \cap g^{-1}(\delta) \}$. 
\end{theorem}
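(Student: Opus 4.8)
The plan is to adapt N\'{e}methi's argument to the present setting. First I would use Theorem~\ref{a_f}, together with the $a_{f}$-conditions on $f_{1},f_{2}$ and the strong non-degeneracy and local tameness of $g$, to choose the radii so that the tubular Milnor fiber of $f=g\circ(f_{1},f_{2})$ is realized as $(f_{1},f_{2})^{-1}(F_{g})$, where $F_{g}:=g^{-1}(\eta)\cap(\overline{D^{2}_{\delta}}\times\overline{D^{2}_{\delta}})$ is the polydisc model of the Milnor fiber of $g$ for a regular value $\eta$ with $0<\lvert\eta\rvert\ll\delta$. Here one uses that $f^{-1}(\eta)=(f_{1},f_{2})^{-1}(g^{-1}(\eta))$ identically, that $(f_{1},f_{2})$ maps $U_{1}(\varepsilon,\delta)\times U_{2}(\varepsilon,\delta)$ onto $\overline{D^{2}_{\delta}}\times\overline{D^{2}_{\delta}}$, and that over the locus of $\overline{D^{2}_{\delta}}\times\overline{D^{2}_{\delta}}$ with both coordinates nonzero the restriction of $(f_{1},f_{2})$ is the product of the two tubular Milnor fibrations of $f_{1}$ and $f_{2}$, hence a locally trivial fibration with fiber $F_{1}\times F_{2}$.

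Next I would analyze the compact surface-with-boundary $F_{g}\subset\Bbb{C}^{2}$ (which is connected, since it is homotopy equivalent to $b_{g}$). We may assume $g^{-1}(\eta)$ is transverse to $\{z_{1}z_{2}=0\}$, so $F_{g}\cap\{z_{1}=0\}$ consists of $n_{1}$ interior points and $F_{g}\cap\{z_{2}=0\}$ of $n_{2}$ interior points, while $F_{g}\cap\{z_{1}=z_{2}=0\}=\emptyset$ because $g(\mathbf{0}_{4})=\mathbf{0}_{2}$. Since $F_{g}$ deformation retracts onto $b_{g}$ and $b_{g}\cap\{z_{1}z_{2}=0\}=\emptyset$, I would fix inside $F_{g}$ a regular neighbourhood $\mathcal{N}$ of $b_{g}$ disjoint from $\{z_{1}z_{2}=0\}$, small discs $D^{(1)}_{1},\dots,D^{(1)}_{n_{1}}$ around the points of $F_{g}\cap\{z_{1}=0\}$ and $D^{(2)}_{1},\dots,D^{(2)}_{n_{2}}$ around those of $F_{g}\cap\{z_{2}=0\}$, and thin bands joining each disc to $\mathcal{N}$ near the base point $\ast$, so that $F_{g}$ deformation retracts onto $\mathcal{N}\cup(\text{bands})\cup\bigcup_{k,i}D^{(k)}_{i}$.

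Then I would pull this decomposition back by $(f_{1},f_{2})$ and identify the pieces. Over $\mathcal{N}$ the map is a locally trivial $F_{1}\times F_{2}$-bundle, so $(f_{1},f_{2})^{-1}(\mathcal{N})$ deformation retracts onto $(f_{1},f_{2})^{-1}(b_{g})$, and over each band it is a trivial bundle $(F_{1}\times F_{2})\times[0,1]$. Over a disc $D^{(1)}_{i}$ centred at a point $(0,a)$ with $a\neq 0$, using that $F_{g}$ is a graph over the $z_{1}$-coordinate near $(0,a)$ (transversality to $\{z_{1}=0\}$), the set $(f_{1},f_{2})^{-1}(D^{(1)}_{i})$ fibers over $f_{1}^{-1}(D^{2}_{\rho})\cap B^{n}_{\varepsilon}$ with fiber $F_{2}$; the base deformation retracts onto $V(f_{1})$ by the conical structure of the tube, over which the $F_{2}$-bundle is the trivial bundle $V(f_{1})\times F_{2}=\tilde{F}_{1}$, so $(f_{1},f_{2})^{-1}(D^{(1)}_{i})\simeq\tilde{F}_{1}$; symmetrically $(f_{1},f_{2})^{-1}(D^{(2)}_{i})\simeq\tilde{F}_{2}$. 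Reassembling, each band piece $(F_{1}\times F_{2})\times[0,1]$ is attached to $(f_{1},f_{2})^{-1}(b_{g})$ along the fiber $(f_{1},f_{2})^{-1}(\ast)=F_{1}\times F_{2}$ at one end and to a copy of $\tilde{F}_{k}$ along the inclusion of a sub-fiber $F_{1}\times F_{2}\hookrightarrow\tilde{F}_{k}$ (which on the $F_{1}$-factor is the specialization map $F_{1}\to V(f_{1})$ coming from the retraction of the tube onto $V(f_{1})$) at the other; up to homotopy this is exactly the space obtained from $(f_{1},f_{2})^{-1}(b_{g})$ by gluing $n_{1}$ copies of $\tilde{F}_{1}$ and $n_{2}$ copies of $\tilde{F}_{2}$ to $(f_{1},f_{2})^{-1}(\ast)$.

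I expect the main obstacle to be twofold. First there is the compatibility of the Milnor data: because $f$ itself need not be strongly non-degenerate and the Milnor neighbourhood is a tube rather than a product of balls, one must verify carefully, following the construction in the proof of Theorem~\ref{a_f} and \cite[Proposition 11]{O2}, that the radii can be chosen so that $(f_{1},f_{2})$ carries the Milnor data of $f$ to that of $g$ and that the polydisc slice $g^{-1}(\eta)\cap(\overline{D^{2}_{\delta}}\times\overline{D^{2}_{\delta}})$ is homeomorphic to the genuine Milnor fiber of $g$; in particular the polydisc radius must dominate the coordinates of the axis points of $F_{g}$ so that those points land in the valid Milnor data of $f_{1}$ and $f_{2}$. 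Second, and more substantial, is the identification $(f_{1},f_{2})^{-1}(D^{(k)}_{i})\simeq\tilde{F}_{k}$: this requires the fact that the tube $f_{j}^{-1}(D^{2}_{\rho})\cap B^{n_{j}}_{\varepsilon}$ deformation retracts onto $V(f_{j})$ — precisely where the $a_{f}$-condition on $f_{j}$ is used — together with control of how the $F_{2}$- (resp.\ $F_{1}$-) family sits over this collapsing base, and a choice of $D^{(k)}_{i}$ small enough that on it the coordinate bounded away from $0$ stays inside the Milnor disc of the other factor while the vanishing coordinate stays inside the Milnor disc of the first.
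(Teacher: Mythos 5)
Your proposal is correct and follows essentially the same route as the paper: realize the Milnor fiber of $f$ up to homotopy as the $(f_{1},f_{2})$-preimage of the polydisc model of $F_{g}$ (the paper's Lemmas on $F_{\varepsilon,\tilde{\delta}}$), retract the base onto $b_{g}$ together with small discs about the axis points joined by arcs, use local triviality of $(f_{1},f_{2})$ away from $\{z_{1}z_{2}=0\}$, and identify the preimages over the axis points with $\tilde{F}_{1}=V(f_{1})\times F_{2}$ and $\tilde{F}_{2}=F_{1}\times V(f_{2})$. Your treatment of the disc pieces (fibering over the tube of $f_{1}$ and collapsing onto $V(f_{1})$ via the conic structure) just makes explicit what the paper leaves implicit when it passes from $(f_{1},f_{2})^{-1}(D_{j}^{2})$ to the fiber over its center.
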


This paper is organized as follows. 
In Section $2$ we give the definition of strongly non-degenerate mixed functions. 
In Section $3$ we prove Theorem~$2$ and the existence of the Milnor fibration of $f$. 
In Section $4$ we prove Theorem~$3$.
In Section $5$ we study the zeta function of the monodromy of $f$. 

\section{Strongly non-degenerate mixed functions}
In this section, we introduce a class of mixed functions which admit 
tubular Milnor fibrations and spherical Milnor fibrations 
given by Oka in \cite{O1}. 
Let $g(\zz, \bar{\zz})$ be a mixed function, i.e., 
$g(\zz, \bar{\zz})$ is a function 
expanded in a convergent power series 
of variables $\zz = (z_1, \dots, z_n)$ and $\bar{\zz} = (\bar{z}_1, \dots, \bar{z}_n)$ 
\[
g(\zz, \bar{\zz}) := \textstyle \sum_{\nu, \mu} c_{\nu, \mu}\zz^{\nu}\bar{\zz}^{\mu}, 
\]
where $\zz^{\nu} = z^{\nu_1}_1 \cdots z^{\nu_n}_n$ for $\nu = (\nu_1, \dots, \nu_n)$ 
(respectively $\bar{\zz}^{\mu} = \bar{z}_{1}^{\mu_1} \cdots \bar{z}_{n}^{\mu_n}$ for $\mu = (\mu_1, \dots, \mu_n))$. 
The \textit{Newton polygon} $\Gamma_{+}(g;\zz.\bar{\zz})$ is defined by the convex hull of 
\[
\textstyle \bigcup_{(\nu, \mu)}\{(\nu + \mu)+\Bbb{R}^n_{+} \ | 
\ c_{\nu, \mu}\neq0\}, 
\]
where $\nu + \mu$ is the sum of the multi-indices of $\zz^{\nu}\bar{\zz}^{\mu}$, 
i.e., $\nu+\mu = (\nu_{1}+\mu_{1}, \dots, \nu_{n}+\mu_{n})$. 
\textit{The Newton boundary} $\Gamma(g;\zz,\bar{\zz})$ is 
the union of compact faces  of $\Gamma_+(g;\zz,\bar{\zz})$. 
Let $\Bbb{Z}_{+}$ be the set of non-negative integers. 
For any weight vector $P = {}^{t}(p_{1}, \dots, p_{n}) \in (\Bbb{Z}_{+})^{n}$, 
we define 
a linear function $\ell_{P}$ on $\Gamma_+(g;\zz,\bar{\zz})$ as follows:
\[
\xi = (\xi_{1}, \dots, \xi_{n}) \mapsto \textstyle \sum_{j=1}^{n} p_{j}\xi_{j}. 
\] 
We denote the minimal value of $\ell_{P}$ by $d(P)$ and 
put $\Delta(P) = \{ \xi \in \Gamma_{+}(g) \mid \ell_{P}(\xi) = d(P) \}$. 
Let $\Delta$ and $P$ be a face of $\Gamma_+(g;\zz,\bar{\zz})$ and a weight vector respectively, 
we define 
\[
g_{\Delta}(\zz,\bar{\zz}) = \textstyle \sum_{(\nu + \mu) \in \Delta}c_{\nu, \mu}\zz^{\nu}\bar{\zz}^{\mu}, \ \ \ 
g_{P} = \textstyle \sum_{\nu + \mu \in \Delta(P)}c_{\nu, \mu}\zz^{\nu}\bar{\zz}^{\mu}. 
\]
The mixed functions $g_{\Delta}$ and $g_{P}$ are called the \textit{face function of $f$ of the face $\Delta$} 
and \textit{face function of $f$ of the weight $P$} respectively.

The \textit{strongly non-degeneracy of mixed functions} is defined from the Newton boundary as follows:  
let $\Delta_1,$ $\dots$ ,$\Delta_m$ be the faces of $\Gamma(g;\zz,\bar{\zz})$. 
If $g_{\Delta_{k}}(\zz,\bar{\zz}) : \Bbb{C}^{\ast n} \rightarrow \Bbb{C}$ 
has no critical point, and 
$g_{\Delta_{k}}$ is surjective if $\dim \Delta_{k} \geq 1$, 
we say that $g(\zz, \bar{\zz})$ is \textit{strongly non-degenerate} for $\Delta_{k}$, where 
$\Bbb{C}^{\ast n} = \{\zz = (z_{1}, \dots, z_{n}) \mid z_{j} \neq 0, j = 1, \dots, n \}$. 
If $g(\zz, \bar{\zz})$ is strongly non-degenerate for any $\Delta_{k}$ for $k = 1,\dots,m$,
we say that $g(\zz, \bar{\zz})$ is 
{\textit{strongly non-degenerate}}. 
If $g( (0,\dots,0, z_{j},0,\dots,0), (0,\dots,0, \bar{z}_{j},0,\dots,0)) \not\equiv 0$ 
for each $j= 1,\dots,n$, 
then we say that $g(\zz, \bar{\zz})$ is \textit{convenient}.

For a subset $I \subset \{1, \dots, n\}$, we set 
\[
\Bbb{C}^{I} = \{ (z_{1}, \dots, z_{n}) \in \Bbb{C}^{n} \mid z_{i} = 0, i \not\in I \}, \ \ \ 
\Bbb{C}^{\ast I}= \{ (z_{1}, \dots, z_{n}) \in \Bbb{C}^{n} \mid z_{i} \neq 0 \Leftrightarrow i \in I \}. 
\] 
Put $g^{I} = g|_{\Bbb{C}^{I}}$. Then we can define the subsets of 
$\{I \mid I \subset \{1, \dots, n\} \}$ as follows: 
\[
\mathcal{I}_{nv}(g) = \{ I \subset \{1, \dots, n\} \mid g^{I} \not\equiv 0 \}, \ \ \
\mathcal{I}_{v}(g) = \{ I \subset \{1, \dots, n\} \mid g^{I} \equiv 0 \}. 
\]
If $I \in \mathcal{I}_{v}(g)$, $\Bbb{C}^{I}$ is called a \textit{vanishing coordinates subspace}. 
For $I \in \mathcal{I}_{v}(g)$, we define the distance function on $\Bbb{C}^{I}$ by 
$\rho_{I}(\zz) = \sqrt{\sum_{i \in I}\lvert z_{i}\rvert^{2}}$. 
We say that $g$ is 
\textit{locally tame along the vanishing coordinates subspace $\Bbb{C}^{I}$} 
if there exists a positive real number $r_{I}$ 
such that for any $\mathbf{a}_{I} = (\alpha_{i})_{i \in I} \in \Bbb{C}^{\ast I}$ with 
$\rho_{I}(\mathbf{a}_{I}) \leq r_{I}$ and for any weight vector $P = {}^{t}(p_{1}, \dots, p_{n})$ 
with 
$I(P) = \{ i \mid p_{i} = 0\} = I, 
g_{P}|_{z_{I} = 
\mathbf{a}_{I}}$ is strongly non-degenerate 
as a function of $\{z_{j} \mid j \in I^{c}\}$. 
A mixed function 
$g$ is called \textit{locally tame} if $g$ is locally tame for any vanishing coordinates subspace. 
If a strongly non-degenerate mixed function $g$ 
is convenient or locally tame for any vanishing coordinates subspace, 
$g$ has both tubular and spherical Milnor fibrations and also two fibrations are isomorphic \cite{O1, O2}.  
Moreover 
$g^{-1}(0) \cap B^{2n}_{\varepsilon}$ has the following stratification. 

\begin{theorem}[\cite{O2}]
Let $g$ be a strongly non-degenerate mixed polynomial. 
Assume that $g$ is 
locally tame for any vanishing coordinates subspace. 
Set 
\[\
\mathcal{S}_{can} := \{ g^{-1}(0) \cap \Bbb{C}^{\ast I}, 
\Bbb{C}^{\ast I} \setminus (g^{-1}(0) \cap \Bbb{C}^{\ast I}) \mid I \in \mathcal{I}_{nv}(g)\} 
\cup \{ \Bbb{C}^{\ast I} \mid I \in \mathcal{I}_{v}(g)\}. 
\]
Then 
$g$ satisfies $a_{f}$-condition with respect to $\mathcal{S}_{can}$ in $B^{2n}_{\varepsilon}$ . 
\end{theorem}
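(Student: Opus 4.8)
The plan is to establish, one after the other, the two clauses in the definition of the $a_{f}$-condition: that $B^{2n}_{\varepsilon}\setminus V(g)$ contains no critical point of $g$ for $\varepsilon$ small, and that Thom's condition holds for every sequence $p_{\nu}\to p_{\infty}$ landing in a stratum $M\subseteq V(g)$ of $\mathcal{S}_{can}$. Preliminarily one checks that $\mathcal{S}_{can}$ is a locally finite partition satisfying the frontier condition — immediate from $\overline{\Bbb{C}^{\ast I}}\cap B^{2n}_{\varepsilon}=\bigsqcup_{J\subseteq I}(\Bbb{C}^{\ast J}\cap B^{2n}_{\varepsilon})$ — whose pieces are smooth (this uses the first clause applied also to each $g^{I}$, $I\in\mathcal{I}_{nv}(g)$, which is again strongly non-degenerate and locally tame), and that sufficiently small spheres meet every stratum transversally, as in the references already cited. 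Both analytic clauses will be extracted from a single {\L}ojasiewicz-type inequality near $\mathbf{0}_{2n}$, proved by the Curve Selection Lemma together with Puiseux expansions, in which strong non-degeneracy controls degenerations remaining in an open torus $\Bbb{C}^{\ast I}$ and local tameness controls degenerations approaching a vanishing coordinate subspace.

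For the first clause, argue by contradiction: if there were critical points $q_{\nu}\to\mathbf{0}_{2n}$ with $g(q_{\nu})\neq0$, the Curve Selection Lemma applied to the semialgebraic locus of critical points off $V(g)$ gives a real-analytic arc $q(t)$ with $q(0)=\mathbf{0}_{2n}$ and $g(q(t))\neq0$ for $t>0$. Writing $I=\{i\mid q_{i}\not\equiv0\}$ and $q_{i}(t)=a_{i}t^{p_{i}}+(\text{higher order})$ with $a_{i}\neq0$ produces a weight vector $P$ supported on $I$, with $I(P)=\{i\mid p_{i}=0\}$; inserting the arc into the equations characterizing a critical point of a mixed function and reading off lowest-order terms shows that the limits of the $I(P)$-coordinates together with the leading coefficients of the remaining ones give a critical point, in an appropriate $\Bbb{C}^{\ast J}$, of $g_{P}$ restricted to $z_{I(P)}=\mathbf{a}_{I(P)}$. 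If $I(P)=\emptyset$ this contradicts strong non-degeneracy of $g$ for the face $\Delta(P)$; if $I(P)\in\mathcal{I}_{v}(g)$ — the only case compatible with the arc leaving $V(g)$ — it contradicts local tameness along $\Bbb{C}^{I(P)}$; and $I(P)\in\mathcal{I}_{nv}(g)$ reduces to strong non-degeneracy of $g^{I(P)}$. Hence no such sequence exists.

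For Thom's condition, take $M=g^{-1}(0)\cap\Bbb{C}^{\ast I_{0}}$ with $I_{0}\in\mathcal{I}_{nv}(g)$, or $M=\Bbb{C}^{\ast I_{0}}$ with $I_{0}\in\mathcal{I}_{v}(g)$, and suppose the condition fails: there are $p_{\nu}\in B^{2n}_{\varepsilon}\setminus V(g)$, $p_{\nu}\to p_{\infty}\in M$, with $T_{p_{\nu}}g^{-1}(g(p_{\nu}))\to\tau$ and a unit vector $v\in T_{p_{\infty}}M\setminus\tau$. Writing $N_{\nu}=\mathrm{span}_{\Bbb{R}}\{\nabla\mathrm{Re}\,g(p_{\nu}),\nabla\mathrm{Im}\,g(p_{\nu})\}$ for the conormal $2$-plane of the level set, failure says the projection of $v$ onto $N_{\nu}$ stays bounded away from $0$; the Curve Selection Lemma, applied to an appropriate subanalytic set of bad pairs $(p,v)$, then yields real-analytic arcs $p(t)\to p_{\infty}$ and $v(t)\to v_{\infty}\in T_{p_{\infty}}M$ with $\|v_{\infty}\|=1$ along which that projection has a nonzero limit. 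Since $p_{\infty}\in\Bbb{C}^{\ast I_{0}}$, the $I_{0}$-coordinates of $p(t)$ stay near nonzero constants $\mathbf{a}_{I_{0}}$ while the others tend to $0$; the Puiseux exponents of the latter give a weight vector $P$ with $I(P)=I_{0}$, and comparing lowest-order terms identifies $\lim N(t)$ with the conormal $2$-plane, at the relevant point, of $g_{P}$ restricted to $z_{I_{0}}=\mathbf{a}_{I_{0}}$ (essentially of $g^{I_{0}}$ when $I_{0}\in\mathcal{I}_{nv}(g)$). That restricted mixed function is strongly non-degenerate — by local tameness along $\Bbb{C}^{I_{0}}$ when $I_{0}\in\mathcal{I}_{v}(g)$, by strong non-degeneracy of $g^{I_{0}}$ when $I_{0}\in\mathcal{I}_{nv}(g)$ — hence has smooth level sets near $p_{\infty}$, so its conormal is a genuine $2$-plane orthogonal to the tangent space of its level set, and that tangent space contains $T_{p_{\infty}}M$. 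Therefore $v_{\infty}\perp\lim N(t)$, i.e. $v_{\infty}\in\tau$, contradicting the choice of $v_{\infty}$.

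I expect the main obstacle to be this last step — extracting from the Puiseux data the precise limiting position of the conormal $2$-plane and aligning it with $T_{p_{\infty}}M$. The feature peculiar to mixed functions is that $N(t)$ is governed jointly by the holomorphic and the antiholomorphic differentials of $g_{P}|_{z_{I_{0}}=\mathbf{a}_{I_{0}}}$, so what is needed is \emph{strong} non-degeneracy, which excludes the resonant relations (Oka's criticality criterion with $|\lambda|=1$) that a merely non-degenerate mixed function may satisfy, rather than ordinary non-degeneracy; one also needs that $g_{P}|_{z_{I_{0}}=\mathbf{a}_{I_{0}}}$ genuinely involves the variables outside $I_{0}$, which is where surjectivity of positive-dimensional face functions and the tameness hypothesis enter. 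Bundling the estimates obtained along the various arcs into one {\L}ojasiewicz inequality, uniform for all small $\varepsilon$, then delivers the absence of critical points and Thom's condition simultaneously.
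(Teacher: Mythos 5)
This theorem is quoted from Oka \cite{O2}; the paper itself supplies no proof of it, so there is nothing in-text to compare against beyond the citation. Your outline --- Curve Selection Lemma plus Puiseux expansions producing a weight vector $P$ with $I(P)$ the set of zero weights, strong non-degeneracy of the face functions ruling out degenerations inside an open torus $\Bbb{C}^{\ast I}$, and local tameness of $g_{P}|_{z_{I}=\mathbf{a}_{I}}$ ruling out degenerations toward a vanishing coordinate subspace --- is essentially the argument of the cited source, and the step you single out as the main obstacle (showing that the normalized limit of the conormal $2$-plane is carried by the $I_{0}^{c}$-directions, hence annihilates $T_{p_{\infty}}M$) is precisely where the substantive estimates in \cite{O2} are carried out.
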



Let $g_t$ be an analytic family of strongly non-degenerate mixed polynomials 
which are locally tame along vanishing coordinate subspaces. 
Assume that the Newton boundary of $g_t$ is constant for $0 \leq t \leq 1$. 
C. Eyral and M. Oka showed that the topological type of $(V(g_{t}), \mathbf{0}_{2n})$ is constant for any $t$ 
and their tubular Milnor fibrations are equivalent \cite{EO}.

\section{The existence of the Milnor fibration of $f$}
Let 
$f_{1} : (\Bbb{R}^{n}, \mathbf{0}_{n}) \rightarrow (\Bbb{R}^{2}, \mathbf{0}_{2})$ 
and $f_{2} : (\Bbb{R}^{m}, \mathbf{0}_{m}) \rightarrow (\Bbb{R}^{2}, \mathbf{0}_{2})$ 
be real analytic germs of independent variables, where $n, m \geq 2$. 
We take a stratification $\mathcal{S}_{j}$ of $f_{j}^{-1}(\mathbf{0}_{2}) \cap B^{n_{j}}_{\varepsilon}$ 
with $n_{1} = n$ and $n_{2} = m$. 
Assume that $f_{j}$ satisfies the conditions (a-i) and (a-ii) with respect to 
$\mathcal{S}_{j}$ for $j = 1, 2$. 
Let $g$ be a $2$-variable strongly non-degenerate mixed polynomial which 
is locally tame along vanishing coordinate subspaces. 
Then a real analytic germ 
$f : (\Bbb{R}^{n} \times \Bbb{R}^{m}, \mathbf{0}_{n+m}) \rightarrow (\Bbb{R}^{2}, \mathbf{0}_{2})$ 
is defined by $f(\xx, \yy) = g(f_{1}(\xx), f_{2}(\yy))$. 
In this section, we prove the existence of the Milnor fibration of $f$. 



\begin{lemma}
The origin $\mathbf{0}_{2}$ is an isolated critical value of $f$. 
\end{lemma}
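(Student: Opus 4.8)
The plan is to show that on a small enough polydisc $U_{1}(\varepsilon,\delta)\times U_{2}(\varepsilon,\delta)$, the only critical point value of $f=g\circ(f_{1},f_{2})$ is $\mathbf{0}_{2}$, by analysing the chain rule $df_{(\xx,\yy)}=dg_{(f_{1}(\xx),f_{2}(\yy))}\circ d(f_{1},f_{2})_{(\xx,\yy)}$. I would first stratify the source according to which of the coordinate subspaces $\Bbb{C}^{I}$ ($I\subset\{1,2\}$) the point $(f_{1}(\xx),f_{2}(\yy))$ lands in, i.e.\ according to whether $f_{1}(\xx)=\mathbf{0}_{2}$ and/or $f_{2}(\yy)=\mathbf{0}_{2}$. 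On the open stratum where $f_{1}(\xx)\neq\mathbf{0}_{2}$ and $f_{2}(\yy)\neq\mathbf{0}_{2}$, the point $(f_{1}(\xx),f_{2}(\yy))$ lies in $\Bbb{C}^{\ast\{1,2\}}$, where strong non-degeneracy of $g$ (Oka, Theorem~2 of the excerpt) forces $dg$ to have full rank $2$ away from $g^{-1}(\mathbf{0}_{2})$; since $f_{1},f_{2}$ are submersions off their respective zero sets by condition (a-i) (local surjectivity plus codimension $p=2$), $d(f_{1},f_{2})$ is also a submersion there, and the composition is a submersion. Hence every critical point with $f(\xx,\yy)\neq\mathbf{0}_{2}$ must have $f_{1}(\xx)=\mathbf{0}_{2}$ or $f_{2}(\yy)=\mathbf{0}_{2}$.

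Next I would handle those two remaining strata. Suppose $f_{1}(\xx)=\mathbf{0}_{2}$, say, and $f_{2}(\yy)\neq\mathbf{0}_{2}$; then $g(\mathbf{0}_{2},f_{2}(\yy))\neq\mathbf{0}_{2}$ forces $\{2\}\in\mathcal{I}_{nv}(g)$, i.e.\ $g$ restricted to the $z_{2}$-axis is not identically zero, and by definition of $\mathcal{I}_{v}(g)$ the coordinate $z_{1}$ belongs to a vanishing situation. The local tameness hypothesis is exactly what controls the behaviour of $g$ near such a point: for $\mathbf{a}=f_{2}(\yy)$ with small modulus, the relevant face function $g_{P}|_{z_{2}=\mathbf{a}}$ is strongly non-degenerate as a function of $z_{1}$, hence has no critical point on $\Bbb{C}^{\ast}$, and a standard Newton-boundary estimate (as in \cite{O1,O2}) shows $g$ itself has no critical point near $(\mathbf{0}_{2},\mathbf{a})$ with $g\neq 0$. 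Pulling this back: the partial differential $\partial g/\partial z_{2}$ composed with $df_{2}$ already surjects onto $\Bbb{R}^{2}$ (since $f_{2}$ is a submersion at $\yy$ and $g$ has the above non-degeneracy in the $z_{1}$-direction, so at least one of the two partials of $g$ is nonvanishing and transverse), so $df_{(\xx,\yy)}$ is surjective. The symmetric argument disposes of the stratum $f_{2}(\yy)=\mathbf{0}_{2}$, $f_{1}(\xx)\neq\mathbf{0}_{2}$, and on the deepest stratum $f_{1}(\xx)=f_{2}(\yy)=\mathbf{0}_{2}$ we have $f(\xx,\yy)=g(\mathbf{0}_{2},\mathbf{0}_{2})=\mathbf{0}_{2}$, which is the allowed value and needs no argument.

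The main obstacle I expect is the middle case: making precise that "local tameness of $g$ along the vanishing coordinate subspace, together with $f_{1},f_{2}$ being submersions off their zero sets" really does kill critical points, because $f$ itself is not strongly non-degenerate and one cannot invoke Oka's theorems for $f$ directly. The care needed is in choosing the weight vector $P$ adapted to the point and translating the chain-rule rank condition $\mathrm{rank}\,(dg\cdot d(f_{1},f_{2}))=2$ into the Newton-boundary language where tameness applies; one has to track that the radius $r_{I}$ from the tameness definition can be achieved by shrinking $\delta$, using that $\|f_{j}(\xx)\|\leq\delta$ on $U_{j}(\varepsilon,\delta)$. Once the radii are matched, the estimates are the standard curve-selection-lemma style arguments from \cite{O1,O2}, and I would cite them rather than reproduce them. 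The conclusion is that $\mathbf{0}_{2}$ is the only value of $f$ at which $f$ has a critical point, i.e.\ $\mathbf{0}_{2}$ is an isolated critical value.
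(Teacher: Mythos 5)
Your overall strategy---the chain rule $Jf=(G_{1}Jf_{1}\ \ G_{2}Jf_{2})$ combined with a case division according to whether $f_{1}(\xx)$ and $f_{2}(\yy)$ vanish---is the same as the paper's, and on the open stratum ($f_{1}(\xx)\neq\mathbf{0}_{2}$, $f_{2}(\yy)\neq\mathbf{0}_{2}$, $f(\xx,\yy)\neq\mathbf{0}_{2}$) your argument is correct and in fact cleaner than the paper's: the block-diagonal differential of $(f_{1},f_{2})$ is surjective onto $\Bbb{R}^{4}$ and $dg$ is surjective onto $\Bbb{R}^{2}$, so the composite is a submersion, which replaces the paper's explicit row computation with the two proportionality factors $r\neq s$.

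The gap is in the mixed case $f_{1}(\xx)=\mathbf{0}_{2}$, $f_{2}(\yy)\neq\mathbf{0}_{2}$, $f(\xx,\yy)\neq\mathbf{0}_{2}$, for two reasons. First, local tameness is the wrong tool there: it constrains $g$ along the \emph{vanishing} coordinate subspaces $I\in\mathcal{I}_{v}(g)$, whereas $g(\mathbf{0}_{2},f_{2}(\yy))\neq\mathbf{0}_{2}$ forces $\{2\}\in\mathcal{I}_{nv}(g)$, so the point $(\mathbf{0}_{2},f_{2}(\yy))$ lies on a non-vanishing axis about which tameness says nothing. Second, and more seriously, your rank bookkeeping does not close. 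Knowing that $g$ itself has no critical point near $(\mathbf{0}_{2},f_{2}(\yy))$ off $g^{-1}(\mathbf{0}_{2})$ only yields $\mathrm{rank}\,(G_{1}\ \ G_{2})=2$, and ``at least one of the two partials of $g$ is nonvanishing'' is weaker still. Since $\xx\in V(f_{1})$, the matrix $Jf_{1}(\xx)$ may be degenerate or even zero (condition (a-i)/(a-ii) only make $f_{1}$ a submersion \emph{off} $V(f_{1})$), so the block $G_{1}Jf_{1}$ cannot compensate for any rank deficiency: one needs the single $2\times 2$ block $G_{2}$ to be invertible on its own. This is exactly what the paper establishes: because $\{2\}\in\mathcal{I}_{nv}(g)$ and $g$ is strongly non-degenerate, the restriction $g^{\{2\}}=g|_{z_{1}=0}$ is a nonzero one-variable mixed function with an isolated critical value at the origin, and $G_{2}$ evaluated at $(\mathbf{0}_{2},f_{2}(\yy))$ is precisely the real Jacobian of $g^{\{2\}}$ at $f_{2}(\yy)$, where $g^{\{2\}}(f_{2}(\yy))=f(\xx,\yy)\neq\mathbf{0}_{2}$; hence $\mathrm{rank}\,G_{2}=2$ and $G_{2}Jf_{2}$ already has rank $2$. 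Replacing your tameness/Newton-boundary paragraph with this restriction argument closes the proof.
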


\begin{proof}
For any $(\xx, \yy) \in (U_{1}(\varepsilon, \delta) \times U_{2}(\varepsilon, \delta)) \setminus V(f)$, 
we show that 
the rank of $Jf((\xx, \yy)) = 2$, 
where $Jf$ is the Jacobian matrix of $f$. 
Set $g_{1} = \Re g, g_{2} = \Im g, z_{j1} = \Re z_{j}$ and $z_{j2} = \Im z_{j}$ for $j = 1, 2$. 
Put 
\[
G_{1} = \begin{pmatrix}
         \frac{\partial g_{1}}{\partial z_{11}} & \frac{\partial g_{1}}{\partial z_{12}} \\
         \frac{\partial g_{2}}{\partial z_{11}} & \frac{\partial g_{2}}{\partial z_{12}} \\
         \end{pmatrix}, \ \ \ 
G_{2} = \begin{pmatrix}
         \frac{\partial g_{1}}{\partial z_{21}} & \frac{\partial g_{1}}{\partial z_{22}} \\
         \frac{\partial g_{2}}{\partial z_{21}} & \frac{\partial g_{2}}{\partial z_{22}} \\
         \end{pmatrix}. 
\]
Since $f = g \circ (f_{1}, f_{2})$, the Jacobian matrix $Jf$ of $f$ is equal to 
\[
\begin{pmatrix}
G_{1} &  G_{2} 
\end{pmatrix}
\begin{pmatrix}
Jf_{1} &  O' \\
O & Jf_{2}
\end{pmatrix}  
= 
\begin{pmatrix}
G_{1}Jf_{1} &  G_{2}Jf_{2} 
\end{pmatrix}, 
\]
where $O$ is the $2 \times n$ zero matrix and $O'$ is the $2 \times m$ zero matrix . 

Suppose that $f_{1}(\xx) = \mathbf{0}_{2}, \{2\} \in \mathcal{I}_{nv}(g)$ 
and $f_{2}(\yy) \neq \mathbf{0}_{2}$. 
Since $g$ is strongly non-degenerate, $g|_{\Bbb{C}^{\ast \{2\}}} : \Bbb{C}^{\ast \{2\}} \rightarrow \Bbb{C}$ 
is a regular function. 
By the condition (a-i), 
$\text{rank}\>Jf_{2}(\yy) = \text{rank}\>G_{2} = 2$. 
Thus $\text{rank}\>Jf(\xx, \yy)$ is equal to $2$. 
If $f_{1}(\xx) \neq \mathbf{0}_{2}, f_{2}(\yy) = \mathbf{0}_{2}$ and  $\{1\} \in \mathcal{I}_{nv}(g)$, 
by using same argument, we can show that $\text{rank}\>Jf(\xx, \yy) = 2$.

Assume that $(\xx, \yy)$ satisfies 
$f_{1}(\xx) \neq \mathbf{0}_{2}, f_{2}(\yy) \neq \mathbf{0}_{2}$ 
and 
$f(\xx, \yy) = g(f_{1}(\xx), f_{2}(\yy)) \neq \mathbf{0}_{2}$. 
Since $f_{1}, f_{2}$ and $g$ have an isolated critical value at the origin, we have 
\[ 
\text{rank}\>Jf_{1}(\xx) = \text{rank}\>Jf_{2}(\yy) = 
\text{rank}\>Jg(f_{1}(\xx), f_{2}(\yy)) = 2. 
\]
If $\text{rank}\>G_{1} = 2$ or $\text{rank}\>G_{2} = 2$, 
the rank of $Jf(\xx, \yy)$ 
is equal to $2$.

Suppose that $\text{rank}\>G_{1} < 2$ and $\text{rank}\>G_{2} < 2$. 
Set 
\[
\Bigl(\frac{\partial g_{1}}{\partial z_{11}}, \frac{\partial g_{1}}{\partial z_{12}}, 
\frac{\partial g_{1}}{\partial z_{21}}, \frac{\partial g_{1}}{\partial z_{22}}\Bigr) = 
(a_{1}, a_{2}, b_{1}, b_{2}).
\]
Since $\text{rank}\>Jg(f_{1}(\xx), f_{2}(\yy)) = 2$, 
we may assume that 
$(a_{1}, a_{2}, b_{1}, b_{2}) \neq (0, 0, 0, 0)$. 
If $(a_{1}, a_{2}) \neq (0, 0)$ and $(b_{1}, b_{2}) \neq (0, 0)$, 
then there exist real numbers $r$ and $s$ such that 
$Jg(f_{1}(\xx), f_{2}(\yy)) = (G_{1}\ G_{2})$ is equal to 
\[
\begin{pmatrix}
a_{1} & a_{2} & b_{1} & b_{2} \\
ra_{1} & ra_{2} & sb_{1} & sb_{2} 
\end{pmatrix}.
\]
Since $\text{rank}\>Jg(f_{1}(\xx), f_{2}(\yy)) = 2$, $r \neq s$. 
Set $f_{j} = (f_{j1}, f_{j2})$, where 
$f_{j1}$ and $f_{j2}$ are real-valued functions for $j = 1, 2$. 
Then $Jf((\xx, \yy))$ is equal to 
\[
\begin{pmatrix}
a_{1}d f_{11} + a_{2}d f_{12} & b_{1}d f_{21} +b_{2}d f_{22} \\
r(a_{1}d f_{11} + a_{2}d f_{12}) & s(b_{1}d f_{21} +b_{2}d f_{22})
\end{pmatrix}. 
\]
Here $d f_{jk}$ is the gradient of a smooth function of $f_{jk}$ for 
$j = 1, 2$ and $k = 1, 2$. 
Since $\text{rank}\>Jf_{1}(\xx) = \text{rank}\>Jf_{2}(\yy) = 2$, 
we have 
\[
a_{1}d f_{11} + a_{2}d f_{12} \neq O, \ \ \  
b_{1}d f_{21} +b_{2}d f_{22} \neq O'. 
\]
Note that $r$ is not equal to $s$. 
Thus the rank of $Jf((\xx, \yy))$ is equal to $2$. 

If $(a_{1}, a_{2}) \neq (0, 0)$ and $(b_{1}, b_{2}) = (0, 0)$, 
then $Jg(f_{1}(\xx), f_{2}(\yy)) = (G_{1}\ G_{2})$ is equal to 
\[
\begin{pmatrix}
a_{1} & a_{2} & 0 & 0 \\
ra_{1} & ra_{2} & b'_{1} & b'_{2} 
\end{pmatrix}.
\]
Hence $Jf((\xx, \yy))$ is equal to 
\[
\begin{pmatrix}
a_{1}d f_{11} + a_{2}d f_{12} & O' \\
r(a_{1}d f_{11} + a_{2}d f_{12}) & b'_{1}d f_{21} +b'_{2}d f_{22}
\end{pmatrix}. 
\]
Since $\text{rank}\>Jf_{1}(\xx) = \text{rank}\>Jf_{2}(\yy) = 2$, 
$a_{1}d f_{11} + a_{2}d f_{12} \neq O$ and 
$b'_{1}d f_{21} +b'_{2}d f_{22} \neq O'$. 
Thus
$\text{rank}\>Jf((\xx, \yy))$ is equal to $2$. 
If $(a_{1}, a_{2}) = (0, 0)$ and $(b_{1}, b_{2}) \neq (0, 0)$, 
by using the same argument, 
we can show that $\text{rank}\>Jf((\xx, \yy)) = 2$. 
\end{proof}

\begin{proof}[Proof of Theorem \ref{a_f}]
By Curve Selection Lemma, 
there exists a real analytic curve $(\xx(t), \yy(t)) \in U_{1}(\varepsilon, \delta) \times U_{2}(\varepsilon, \delta)$ 
such that 
\begin{itemize}
\item
$(\xx(0), \yy(0)) \in f^{-1}(\mathbf{0}_{2})$, 
\item
$f_{1}(\xx(t)) \not\equiv \mathbf{0}_{2}$ and  
$f_{2}(\yy(t)) \not\equiv \mathbf{0}_{2}$ for $t \neq 0$, 
\item
$f(\xx(t), \yy(t)) \neq \mathbf{0}_{2}$ for $t \neq 0$. 
\end{itemize}

Suppose that $f_{1}(\xx(0)) \neq \mathbf{0}_{2}$ and $f_{2}(\yy(0)) \neq \mathbf{0}_{2}$. 
Since $f_{1}$ and $f_{2}$ satisfy the condition (a-i) and $g$ is strongly non-degenerate, 
$(\xx(0), \yy(0))$ is a regular point of $f$. 

Take $M_{1} \in \mathcal{S}_{1}$ and $M_{2} \in \mathcal{S}_{2}$. 
Suppose that $\xx(0) \in M_{1}$ and $\yy(0) \in M_{2}$. 
Since $f_{1}$ and $f_{2}$ satisfy the condition (a-ii), we have 
\begin{equation*}
\begin{split}
&\lim_{t \rightarrow 0} T_{(x(t), y(t))}f^{-1}\bigl(f(\xx(t), \yy(t))\bigr) \\
\supset &\lim_{t \rightarrow 0} \Bigl(T_{\xx(t)}f_{1}^{-1}\bigl(f_{1}(\xx(t))\bigr) \times 
T_{\yy(t)}f_{2}^{-1}\bigl(f_{2}(\yy(t))\bigr)\Bigr)  \\
\supset &\ T_{(\xx(0), \yy(0))} M_{1} \times M_{2}. 
\end{split}
\end{equation*} 

Suppose that $f_{1}(\xx(0)) = \mathbf{0}_{2}$ and 
$\{ 2\} \in \mathcal{I}_{v}(g)$. 
By Theorem $4$, 
there exist vectors 
\begin{equation*}
\begin{split}
\mathbf{v}_{g,1}(t) &= (g_{1,1}, g_{1,2}, 0, 0)t^{r} + (\text{higher terms}), \\
\mathbf{v}_{g,2}(t) &= (g_{2,1}, g_{2,2}, 0, 0)t^{r} + (\text{higher terms})
\end{split}
\end{equation*}
such that 
$\text{rank}\>\begin{pmatrix}
g_{1,1} &  g_{1,2} \\
g_{2,1} & g_{2,2}
\end{pmatrix} = 2$ 
and 
$\lim_{t \rightarrow 0} T_{(f_{1}(\xx(t)), f_{2}(\yy(t)))}
g^{-1}\bigl(g(f_{1}(\xx(t)), f_{2}(\yy(t)))\bigr)$ is orthogonal to 
$(g_{1,1}, g_{1,2}, 0, 0)$ and $(g_{2,1}, g_{2,2}, 0, 0)$. 

Since $f_{1}$ satisfies $a_{f}$-condition with respect to $\mathcal{S}_{1}$, 
there exist vectors 
\[
\mathbf{v}_{f_{1},1}(t) = \mathbf{a}_{1}t^{s} + (\text{higher terms}), \ 
\mathbf{v}_{f_{1},2}(t) = \mathbf{a}_{2}t^{s} + (\text{higher terms})
\]
such that 
\[
\lim_{t \rightarrow 0} T_{\xx(t)}
f_{1}^{-1}\bigl(f_{1}(\xx(t))\bigr) = \mathbf{a}_{1}^{\bot} \cap \mathbf{a}_{2}^{\bot} 
\supset T_{\xx(0)}M_{1}, 
\]
where $\mathbf{a}_{j}^{\bot} = \{ \mathbf{v} \in \Bbb{R}^{n} \mid (\mathbf{v}, \mathbf{a}_{j}) = 0 \}$
for $j = 1, 2$. 
Up to scalar multiplications, 
we may assume that 
$\mathbf{v}_{f_{1},1}(t)$ and 
$\mathbf{v}_{f_{1},2}(t)$ are equal to $d f_{11}$ and $d f_{12}$ respectively. 
Note that $\mathbf{v}_{g,1}(t)$ and $\mathbf{v}_{g,2}(t)$ are linear combinations of 
$d g_{1}$ and $d g_{2}$ for $t \neq 0$. 
See \cite{O2}. 
Since $f$ is the composition of $g$ and $(f_{1}, f_{2})$, 
$T_{(\xx(t), \yy(t))}f^{-1}\bigl(f(\xx(t), \yy(t))\bigr)$ 
is orthogonal to 
\[
\begin{pmatrix}
\mathbf{v}_{g,1}(t) \\
\mathbf{v}_{g,2}(t)
\end{pmatrix}
\begin{pmatrix}
\mathbf{v}_{f_{1},1}(t) & 0 \cdots 0 \\
\mathbf{v}_{f_{1},2}(t) & 0 \cdots 0 \\
0 \cdots 0 & d f_{21}(\yy(t)) \\
0 \cdots 0 & d f_{22}(\yy(t)) \\
\end{pmatrix} = 
\begin{pmatrix}
(g_{1,1}\mathbf{a}_{1} + g_{1,2}\mathbf{a}_{2}, 0, \dots, 0)t^{r+s} + (\text{higher terms}) \\
(g_{2,1}\mathbf{a}_{1} + g_{2,2}\mathbf{a}_{2}, 0, \dots, 0)t^{r+s} + (\text{higher terms}) 
\end{pmatrix}.  
\]
Thus  
$\lim_{t \rightarrow 0} T_{(\xx(t), \yy(t))}f^{-1}\bigl(f(\xx(t), \yy(t))\bigr)$ 
is orthogonal to the following vectors: 
\[
(g_{1,1}\mathbf{a}_{1} + g_{1,2}\mathbf{a}_{2}, 0, \dots, 0), \ \ \
(g_{2,1}\mathbf{a}_{1} + g_{2,2}\mathbf{a}_{2}, 0, \dots, 0). 
\]
Since $\text{rank}\>\begin{pmatrix}
g_{1,1} &  g_{1,2} \\
g_{2,1} & g_{2,2}
\end{pmatrix} = 2$, 
$\mathbf{v} \in \Bbb{R}^{n}$ is orthogonal to $\mathbf{a}_{1}$ and $\mathbf{a}_{2}$ 
if and only if 
$\mathbf{v}$ is orthogonal to $g_{1,1}\mathbf{a}_{1} + g_{1,2}\mathbf{a}_{2}$ and 
$g_{2,1}\mathbf{a}_{1} + g_{2,2}\mathbf{a}_{2}$. 
Thus we have the following inclusion relation: 
\begin{equation*}
\begin{split}
&\lim_{t \rightarrow 0} T_{(\xx(t), \yy(t))}f^{-1}\bigl(f(\xx(t), \yy(t))\bigr) \\
\supset 
     &\lim_{t \rightarrow 0} T_{\xx(t)}f_{1}^{-1}\bigl(f_{1}(\xx(t)\bigr) \times 
     T_{\yy(t)}(U_{2}(\varepsilon, \delta) \setminus V(f_{2})) \\
     \supset &\ T_{\xx(0)}M_{1} \times T_{\yy(0)}(U_{2}(\varepsilon, \delta)  \setminus V(f_{2})). 
\end{split}
\end{equation*}
If $f_{2}(\xx(0)) = \mathbf{0}_{2}$ and 
$\{ 1\} \in \mathcal{I}_{v}(g)$,  
we can use the same argument. 

If $f_{1}(\xx(t)) = \mathbf{0}_{2}$ or $f_{2}(\yy(t)) = \mathbf{0}_{2}$ for $t \geq 0$, 
we can use Whitney regularity of $V(f_{j})$ for $j = 1, 2$. 
Since $f_{1}$ and $f_{2}$ satisfy the condition (a-ii) and $g$ is strongly non-degenerate, 
$f$ satisfies $a_{f}$-condition with respect to $\mathcal{S}_{f}$. 
\end{proof}

\begin{example}
Consider $g(\zz, \bar{\zz}) = (d_{1} + c_{1}i)z_{1}\lvert z_{2}\rvert^{2}$. 
Put $\ww = (c_{1} + d_{1}i, z_{2}) \in \Bbb{C}^{2}$, where $c_{1} + d_{1}i \neq 0$. 
Then the normalized gradient of $\Re g$ is given by 
\[
\frac{1}{\sqrt{c_{1}^{2} + d_{1}^{2}}}(d_{1}, -c_{1}, 0, 0). 
\]
When $z_{2} \rightarrow 0$,
we have 
\[
\textstyle \lim_{z_{2} \rightarrow 0} T_{\ww}g^{-1}(g(\ww)) 
\subset \textstyle \lim_{z_{2} \rightarrow 0} T_{\ww}(\Re g)^{-1}(\Re g(\ww))
\not\supset \Bbb{C} \times \{0\}. 
\]
Hence $g$ does not satisfy $a_{f}$-condition with respect to $\mathcal{S}_{can}$ \cite{O2}. 
Let $f_{1} : (\Bbb{R}^{n}, \mathbf{0}_{n}) \rightarrow (\Bbb{R}^{2}, \mathbf{0}_{2})$ 
and $f_{2} : (\Bbb{R}^{m}, \mathbf{0}_{m}) \rightarrow (\Bbb{R}^{2}, \mathbf{0}_{2})$ 
be real analytic germs of independent variables, where $n, m \geq 2$. 
Assume that $f_{1}$ and $f_{2}$ satisfy the conditions (a-i) and (a-ii). 
In this case, $G_{1}(\ww)Jf_{1}(\xx) \neq O$, where 
$\xx \in f_{1}^{-1}(c_{1} + d_{1}i)$. Set $\yy \in f_{2}^{-1}(z_{2})$. 
Then we have 
\[
\textstyle \lim_{z_{2} \rightarrow 0} T_{(\xx, \yy)}f^{-1}(f(\xx, \yy)) \not\supset 
T_{\xx}(U_{1}(\varepsilon, \delta) \setminus V(f_{1})) \times T_{\yy} \mathcal{S}_{2}. 
\]
Thus $f = g \circ (f_{1}, f_{2})$ does not satisfy $a_{f}$-condition with respect to $\mathcal{S}_{f}$. 
We next consider 
$g_{a}(\zz, \bar{\zz}) = z_{1}z_{2}^{a}\bar{z}_{2}$ for $a \geq 2$. 
Then $g_{a}$ is a strongly non-degenerate mixed polynomial which 
is locally tame along vanishing coordinate subspaces. 
By Theorem \ref{a_f}, $f_{a} = g_{a}\circ(f_{1}, f_{2})$ 
satisfies $a_{f}$-condition with respect to $\mathcal{S}_{f_{a}}$. 
\end{example}

\begin{lemma}
The real analytic map 
$f$ is locally surjective on $V(f)$ near the origin and 
the codimension of $V(f)$ is equal to $2$.
\end{lemma}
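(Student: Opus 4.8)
The plan is to prove the two assertions separately, reducing each to the corresponding property of $f_1$, $f_2$ and $g$, which hold by hypothesis (conditions (a-i) for $f_1$, $f_2$; strong non-degeneracy and local tameness for $g$, whence $g$ itself is locally surjective with $\mathrm{codim}_{\Bbb{R}} V(g) = 2$ by the discussion in Section 2). For the codimension statement, I would stratify $V(f)$ according to where the ``inner'' maps vanish. On the open stratum, where $f_1(\xx) \neq \mathbf{0}_2$ and $f_2(\yy) \neq \mathbf{0}_2$, the point $(f_1(\xx), f_2(\yy))$ lies in $\Bbb{C}^{\ast 2}$, the map $(f_1, f_2)$ is a submersion there (both Jacobians have rank $2$ by (a-i)), so $V(f)$ locally coincides with the preimage of $V(g) \cap \Bbb{C}^{\ast 2}$, which has codimension $2$ in $\Bbb{C}^{\ast 2}$, hence $V(f)$ has codimension $2$ there. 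On the loci $f_1(\xx) = \mathbf{0}_2$ (resp. $f_2(\yy) = \mathbf{0}_2$): if $\{2\} \in \mathcal{I}_{nv}(g)$ (resp. $\{1\} \in \mathcal{I}_{nv}(g)$) then $g$ restricted to that coordinate axis is a regular function vanishing only at the origin, so $V(f)$ there is $V(f_1) \times V(f_2)$-type locus of the expected dimension; if instead $\{2\} \in \mathcal{I}_{v}(g)$, then $V(f)$ contains $V(f_1) \times U_2(\varepsilon,\delta)$, and one checks using $\mathrm{codim}\, V(f_1) = 2$ that this piece still has codimension $2$ in $\Bbb{R}^{n+m}$. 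Taking the union of strata, the codimension of $V(f)$ at the origin is $2$.

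For local surjectivity, fix $(\xx_0, \yy_0) \in V(f)$; I want a neighborhood $W$ of $(\xx_0, \yy_0)$ with $\mathbf{0}_2$ an interior point of $f(W)$. If $f_1(\xx_0) \neq \mathbf{0}_2$ and $f_2(\yy_0) \neq \mathbf{0}_2$, then $g(f_1(\xx_0), f_2(\yy_0)) = \mathbf{0}_2$ with $(f_1(\xx_0), f_2(\yy_0)) \in V(g) \cap \Bbb{C}^{\ast 2}$; since $g$ is locally surjective on $V(g)$ near the origin, there is a neighborhood $W'$ of $(f_1(\xx_0), f_2(\yy_0))$ in $\Bbb{C}^2$ with $\mathbf{0}_2 \in \mathrm{int}\, g(W')$, and because $(f_1, f_2)$ is an open map near $(\xx_0, \yy_0)$ (it is a submersion there, by (a-i)), I may pull $W'$ back to a neighborhood $W$ of $(\xx_0, \yy_0)$ with $(f_1, f_2)(W) \supset W'$, whence $f(W) = g((f_1,f_2)(W)) \supset g(W') \ni \mathbf{0}_2$ as an interior point. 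If $f_1(\xx_0) = \mathbf{0}_2$ and $f_2(\yy_0) = \mathbf{0}_2$, I use that $f_1$ is locally surjective on $V(f_1)$ and $f_2$ on $V(f_2)$: for any $(w_1, w_2)$ near $(\mathbf{0}_2, \mathbf{0}_2)$ one finds $\xx$ near $\xx_0$ and $\yy$ near $\yy_0$ with $f_1(\xx) = w_1$, $f_2(\yy) = w_2$, so $(f_1,f_2)$ already covers a full neighborhood of $(\mathbf{0}_2, \mathbf{0}_2)$ in $\Bbb{C}^2$; composing with the local surjectivity of $g$ at the origin (which holds since $\mathbf{0}_4 \in V(g)$ and $g$ is locally surjective on $V(g)$) gives that $\mathbf{0}_2$ is interior to $f(W)$. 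The mixed case $f_1(\xx_0) = \mathbf{0}_2$, $f_2(\yy_0) \neq \mathbf{0}_2$ (and its symmetric partner) is handled by combining the two devices: $(f_1, f_2)$ maps a neighborhood of $(\xx_0, \yy_0)$ onto a neighborhood of $(\mathbf{0}_2, f_2(\yy_0))$ in $\Bbb{C}^2$ — surjective in the first factor by local surjectivity of $f_1$ on $V(f_1)$, open in the second by (a-i) for $f_2$ — and then local surjectivity of $g$ at the point $(\mathbf{0}_2, f_2(\yy_0)) \in V(g)$ finishes the argument.

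I expect the main obstacle to be the bookkeeping along the vanishing coordinate subspaces: one must confirm that, when $\{j\} \in \mathcal{I}_v(g)$, the piece of $V(f)$ lying over $z_j = 0$ really does have codimension exactly $2$ — the potential worry is that this locus is ``too fat'' because $g$ vanishes identically there. The resolution is that $V(f)$ restricted to $\{f_1(\xx) = \mathbf{0}_2\}$ equals $V(f_1) \times U_2(\varepsilon, \delta)$, and $\mathrm{codim}_{\Bbb{R}^n} V(f_1) = 2$ gives $\mathrm{codim}_{\Bbb{R}^{n+m}}(V(f_1) \times U_2) = 2$, consistent with the open stratum; a symmetric remark handles $\{f_2(\yy) = \mathbf{0}_2\}$, and one must also note that whichever of $\{1\},\{2\}$ lies in $\mathcal{I}_{nv}(g)$ contributes only a thinner locus. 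Since these are exactly the strata appearing in $\mathcal{S}_f$ and the transversality of the relevant spheres has already been arranged when choosing the common $a_f$-stable radius $\varepsilon$, no new smallness hypotheses on $\varepsilon, \delta$ are needed beyond those already fixed.
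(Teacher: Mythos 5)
Your proposal is correct and follows essentially the same route as the paper: decompose $V(f)$ into the stratum where both $f_1$ and $f_2$ vanish, the strata $\mathcal{S}'$ over vanishing coordinate subspaces, and the remaining locus where $f$ is regular, then derive local surjectivity from condition (a-i) for $f_1,f_2$ together with the local surjectivity of $g$, and read off the codimension stratum by stratum. The paper's own proof is just a terser version of this (it asserts the existence of the neighborhoods $W_{(\xx,\yy)}$ without spelling out the product-neighborhood argument you give); your only minor slip --- that $g^{I}$ for $I\in\mathcal{I}_{nv}(g)$ vanishes ``only at the origin'' --- is harmless, since on that locus $f$ is regular anyway.
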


\begin{proof}
Since $f_{1}$ and $f_{2}$ satisfy the condition (a-i) and $g$ admits the Milnor fibration, 
$f$ is locally surjective on $\mathcal{S}_{1} \times \mathcal{S}_{2}$. 
Let $(\xx, \yy)$ be a point of $\mathcal{S}'$. 
By using the condition (a-i) and the Milnor fibration of $f_{1}, f_{2}$ and $g$, 
we can show that the existence of a neighborhood $W_{(\xx, \yy)}$ of $(\xx, \yy)$ such that 
$\mathbf{0}_{2}$ is an interior point of $f(W_{(\xx, \yy)})$. 

Since $V(f) \setminus (\mathcal{S}_{1} \times \mathcal{S}_{2} \sqcup \mathcal{S}')$ 
is the set of regular points of $f$, 
$f$ is locally surjective on $V(f) \setminus (\mathcal{S}_{1} \times \mathcal{S}_{2} \sqcup \mathcal{S}')$ and 
the codimension of $V(f)$ is equal to~$2$.
\end{proof}

By Theorem $2$, Lemma $2$ and 
the Ehresmann fibration theorem \cite{W}, 
we can show the following corollary. 

\begin{corollary}
There exist a positive real number $\varepsilon_{0}$ such that for any $0 < \varepsilon \leq \varepsilon_{0}$, 
there exists a positive real nymber $\delta(\varepsilon)$ so that 
\[
f|_{B^{n+m}_{\varepsilon} \cap f^{-1}(D^{2}_{\delta} \setminus \{ \mathbf{0}_{2}\})} 
: B^{n+m}_{\varepsilon} \cap f^{-1}(D^{2}_{\delta} \setminus \{ \mathbf{0}_{2}\})
\rightarrow D^{2}_{\delta} \setminus \{ \mathbf{0}_{2}\}
\]
is a locally trivial fibration for $0 < \delta \leq \delta(\varepsilon)$. 
The isomorphism class of this fibration does not depend on the choice of $\varepsilon$ and $\delta$. 
\end{corollary}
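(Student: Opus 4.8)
The plan is to derive the corollary purely as a packaging of the three ingredients already established: Theorem~\ref{a_f} ($f$ satisfies $a_f$-condition with respect to $\mathcal S_f$), Lemma~2 ($f$ is locally surjective on $V(f)$ and $\operatorname{codim}_{\Bbb R}V(f)=2$), and the Ehresmann fibration theorem. First I would fix a Whitney stratification of $V(f)$ refining $\mathcal S_f$; since $V(f)$ is a real analytic set this exists (cf.~\cite{H}), and by the transversality statements recalled in the introduction (see \cite[Corollary 2.9]{M2} and the proof of \cite[Lemma 3.2]{BV}) there is $\varepsilon_0>0$ such that for every $0<\varepsilon\le\varepsilon_0$ the sphere $S^{n+m-1}_{\varepsilon}$ meets every stratum transversely; this is exactly the requirement that $\varepsilon$ be an $a_f$-stable radius for $f$. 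Shrinking $\varepsilon_0$ if necessary, the origin $\mathbf 0_2$ is the only critical value of $f|_{B^{n+m}_{\varepsilon}}$ by Lemma~1.

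Next, for a fixed such $\varepsilon$, I would invoke the $a_f$-condition to produce $\delta(\varepsilon)>0$ so small that for all $0<\delta\le\delta(\varepsilon)$ and all $\eta$ with $0<\|\eta\|\le\delta$ the fiber $f^{-1}(\eta)$ meets the sphere $S^{n+m-1}_{\varepsilon}$ transversely. This is the standard consequence of $a_f$-regularity along the strata of $V(f)$: if transversality failed along a sequence $\eta_\nu\to\mathbf 0_2$ one extracts, via the Curve Selection Lemma, an analytic arc on which the limit of the tangent planes $T_{p_\nu}f^{-1}(f(p_\nu))$ would fail to contain the tangent space of the limiting stratum and also fail to be transverse to the sphere, contradicting the $a_f$-condition together with the transversality of the sphere to that stratum. (This is precisely the mechanism of \cite[Proposition 11]{O2}, to which I would refer.) Consequently, on the manifold with corners $B^{n+m}_{\varepsilon}\cap f^{-1}(D^2_{\delta}\setminus\{\mathbf 0_2\})$ the map $f$ is a proper submersion whose restriction to the boundary piece $S^{n+m-1}_{\varepsilon}\cap f^{-1}(D^2_{\delta}\setminus\{\mathbf 0_2\})$ is again a submersion onto $D^2_{\delta}\setminus\{\mathbf 0_2\}$; Ehresmann's theorem for manifolds with boundary then gives the asserted local triviality.

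For the independence of the isomorphism class from $\varepsilon$ and $\delta$, I would argue in the usual two steps. Fixing $\varepsilon$ and letting $\delta$ vary: for $0<\delta'<\delta\le\delta(\varepsilon)$ the inclusion of the $\delta'$-tube into the $\delta$-tube is a fiber-preserving deformation retract, obtained by pushing along the (radially rescaled) gradient-like flow of $\|f\|$, which is nonvanishing on the complement of $V(f)$ by Lemma~1. Fixing the proportionality between $\delta$ and $\varepsilon$ and letting $\varepsilon$ shrink: since every $0<\varepsilon'\le\varepsilon_0$ is an $a_f$-stable radius, the family of fibrations over $0<\varepsilon'\le\varepsilon$ is itself locally trivial over the $\varepsilon'$-parameter — again an Ehresmann argument, now for the map $(f,\|\cdot\|)$ restricted to the region between the two spheres, using that $a_f$-regularity forces the fibers of $f$ to stay transverse to all intermediate spheres. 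Combining the two comparisons shows any two choices yield isomorphic fibrations.

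The main obstacle is the second step: manufacturing $\delta(\varepsilon)$ and proving the sphere–fiber transversality uniformly. Everything else is a mechanical application of Ehresmann and of the nonvanishing of $\nabla\|f\|$ away from $V(f)$. The transversality is not automatic from $a_f$-regularity of $V(f)$ alone — one must rule out the degeneration of $T_{p}f^{-1}(f(p))$ toward the tangent plane of the sphere as $f(p)\to\mathbf 0_2$ — and this is exactly where the stratification $\mathcal S_f$ and the Curve Selection Lemma do the work, in the manner of \cite[Proposition 11]{O2}; I would present this as the heart of the proof and treat the retraction/Ehresmann bookkeeping as routine.
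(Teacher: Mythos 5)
Your proposal is correct and follows essentially the same route as the paper, which derives the corollary directly from Theorem~\ref{a_f}, Lemma~2 and the Ehresmann fibration theorem, using exactly the stable-radius/transversality machinery (via \cite[Proposition 11]{O2}) set up in the introduction. Your write-up simply makes explicit the steps the paper leaves implicit, including the sphere--fiber transversality argument and the standard comparison of tubes for the independence of $\varepsilon$ and $\delta$.
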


\section{Proof of Theorem $3$}


Let $b_{g} \subset \Bbb{C}^{2}$ be a bouquet of circles with base point $\ast$. 
Assume that $b_{g}$ is homotopy equivalent to the Milnor fiber of $g$ and 
$b_{g} \cap \{ z_{1}z_{2} = 0\} = \emptyset$. 
Let $f_{1} : (\Bbb{R}^{n}, \mathbf{0}_{n}) \rightarrow (\Bbb{R}^{2}, \mathbf{0}_{2})$ 
and $f_{2} : (\Bbb{R}^{m}, \mathbf{0}_{m}) \rightarrow (\Bbb{R}^{2}, \mathbf{0}_{2})$ 
be real analytic germs of independent variables which satisfy the conditions (a-i) and (a-ii). 
Set $\tilde{F}_{1} = V(f_{1}) \times F_{2}$ and $\tilde{F}_{2} = F_{1} \times V(f_{2})$. 

Let $\varepsilon$ be a common $a_{f}$-stable radius for $f_{1}$ and $f_{2}$. 
Since $f_{j}^{-1}(\eta)$ intersects transversely with $\partial B^{n_{j}}_{\varepsilon}$ 
for $j = 1, 2$ and $\lvert \eta \rvert \ll \varepsilon$, 
there exists a vector field $\mathbf{v}_{j}$ on $B^{n_{j}}_{\varepsilon}$ 
such that 
\begin{itemize}
\item
$(\mathbf{v}_{1}(\xx), \xx) < 0$ and $(\mathbf{v}_{2}(\yy), \yy) < 0$,
\item
$\mathbf{v}_{1}(\xx)$ is tangent to $f_{1}^{-1}(f_{j}(\xx))$ and 
$\mathbf{v}_{2}(\yy)$ is tangent to $f_{2}^{-1}(f_{j}(\yy))$, 
\end{itemize}
where $0 < \lvert \eta \rvert \leq \delta$ and $j = 1, 2$. 
By using $\mathbf{v}_{1}$ and $\mathbf{v}_{2}$, 
we can choose $0 < \varepsilon < \varepsilon_{1} \ll 1$ and 
$0 < \delta < \delta_{1} \ll \varepsilon_{1}$ such that 
\begin{enumerate}
\item
$f_{j}(B^{n_{j}}_{\varepsilon}) \subset D^{2}_{\delta_{1}}$ and 
\[
f_{j} : B^{n_{j}}_{\varepsilon} \cap f_{j}^{-1}(D^{2}_{\eta} \setminus \{ \mathbf{0}_{2}\}) 
\rightarrow D^{2}_{\eta} \setminus \{ \mathbf{0}_{2}\}
\]
is a locally trivial fibration for $j= 1, 2$ and $\eta = \delta, \delta_{1}$, 
\item
$\bigl(D^{2}_{\delta} \times D^{2}_{\delta}, g^{-1}(\tilde{\delta}) \cap 
(D^{2}_{\delta} \times D^{2}_{\delta})\bigr)$ is homeomorphic to 
(Milnor ball, $g^{-1}(\tilde{\delta})$),
\item
there exists a deformation retract 
\[
r_{j} : B^{n_{j}}_{\varepsilon_{1}} \setminus \text{Int}\>B^{n_{j}}_{\varepsilon} \cap f_{j}^{-1}(D^{2}_{\delta}) 
\rightarrow \partial B^{n_{j}}_{\varepsilon} \cap f_{j}^{-1}(D^{2}_{\delta}) 
\] 
such that 
$r_{j}|_{\partial B^{n_{j}}_{\varepsilon} \cap f_{j}^{-1}(D^{2}_{\delta})} = \text{id}$ 
and 
$f_{j} \circ r_{j} = f_{j}$. 
\end{enumerate}

We take $\tilde{\delta}$ sufficiently small so that 
\begin{itemize}
\item
$g^{-1}(\tilde{\delta})$ is a Milnor fiber in 
$D^{2}_{\delta_{1}} \times D^{2}_{\delta_{1}}$ and 
$D^{2}_{\delta} \times D^{2}_{\delta}$, 
\item
$g^{-1}(\tilde{\delta}) \cap \{z_{1}z_{2} = 0\} \subset D^{2}_{\delta} \times D^{2}_{\delta}$. 
\end{itemize}

\begin{lemma}
Set $F_{\varepsilon, \tilde{\delta}} = f^{-1}(\tilde{\delta}) \cap (B^{n}_{\varepsilon} \times B^{m}_{\varepsilon})$. 
Then 
$(f_{1}, f_{2})^{-1}(D^{2}_{\delta} \times D^{2}_{\delta}) \cap F_{\varepsilon, \tilde{\delta}}$ 
is homotopy equivalent to $F_{\varepsilon, \tilde{\delta}}$. 
\end{lemma}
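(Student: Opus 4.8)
The plan is to build an explicit deformation retraction of $F_{\varepsilon,\tilde\delta}$ onto the subset $(f_1,f_2)^{-1}(D^2_\delta\times D^2_\delta)\cap F_{\varepsilon,\tilde\delta}$ by pushing the ``outer'' part of the fiber inward along fibers of $f_1$ and $f_2$ separately. The key observation is that on $F_{\varepsilon,\tilde\delta}=f^{-1}(\tilde\delta)\cap(B^n_\varepsilon\times B^m_\varepsilon)$ the values $f_1(\xx)$ and $f_2(\yy)$ are constrained only by $g(f_1(\xx),f_2(\yy))=\tilde\delta$; in particular a point of $F_{\varepsilon,\tilde\delta}$ lies outside $(f_1,f_2)^{-1}(D^2_\delta\times D^2_\delta)$ precisely when at least one of $f_1(\xx)$, $f_2(\yy)$ has norm $>\delta$. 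Since $b_g$ (hence $g^{-1}(\tilde\delta)$) avoids $\{z_1z_2=0\}$ and since, by the choice of $\tilde\delta$, the part of $g^{-1}(\tilde\delta)$ meeting the coordinate axes sits inside $D^2_\delta\times D^2_\delta$, on the outer part both coordinates $f_1(\xx),f_2(\yy)$ are nonzero, so we are working over $\mathbb{C}^{\ast 2}$ where $g$ is a regular map.

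First I would fix a smooth ``pull-back'' isotopy of $D^2_{\delta_1}$ into $D^2_\delta$ that is the identity on $D^2_\delta$ and radially contracts, and lift it, via the local triviality of the fibrations in condition (1) above together with the deformation retracts $r_j$ of condition (3), to an ambient isotopy $\Phi_j$ of $B^{n_j}_\varepsilon\cap f_j^{-1}(D^2_{\delta_1}\setminus\{\mathbf 0_2\})$ covering it, with $f_j\circ\Phi_j = (\text{pull-back})\circ f_j$. Concretely, $\Phi_j$ moves a point $\xx$ with $\|f_1(\xx)\|>\delta$ along the fiber flow of $f_j$ until $f_j$ reaches norm $\delta$, and is stationary where $\|f_j\|\le\delta$; the vector fields $\mathbf v_j$ and the retracts $r_j$ are exactly what makes this well-defined and continuous up to the sphere $\partial B^{n_j}_\varepsilon$. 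Then the product map $\Phi_1\times\Phi_2$, restricted to $F_{\varepsilon,\tilde\delta}$, is the desired homotopy: because $g(f_1(\xx),f_2(\yy))=\tilde\delta$ and because the two pull-back isotopies on the $z_1$- and $z_2$-disks can be chosen so that the composite deformation stays inside $g^{-1}(\tilde\delta)$ (here one uses that $g$ restricted to $\mathbb{C}^{\ast 2}$ is a submersion onto a neighborhood of $\tilde\delta$, so $g^{-1}(\tilde\delta)$ is a manifold along which the individual moves can be corrected to remain on the level set), the image lands in $F_{\varepsilon,\tilde\delta}$, and at the end every point satisfies $\|f_1(\xx)\|\le\delta$ and $\|f_2(\yy)\|\le\delta$. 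One also checks it is the identity on $(f_1,f_2)^{-1}(D^2_\delta\times D^2_\delta)\cap F_{\varepsilon,\tilde\delta}$ and preserves that set throughout.

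The main obstacle is the compatibility of the two separate fiber-flows with the constraint $g(f_1(\xx),f_2(\yy))=\tilde\delta$: contracting the $z_1$-coordinate of $g$ alone generally throws the pair $(f_1(\xx),f_2(\yy))$ off $g^{-1}(\tilde\delta)$. I would resolve this by not contracting the two coordinates independently but instead choosing a single smooth vector field on $D^2_{\delta_1}\times D^2_{\delta_1}$ that is tangent to $g^{-1}(\tilde\delta)$, points ``inward'' in the sense that its flow eventually carries $g^{-1}(\tilde\delta)\cap(D^2_{\delta_1}\times D^2_{\delta_1})$ into $g^{-1}(\tilde\delta)\cap(D^2_\delta\times D^2_\delta)$ (possible by condition (2), which identifies the latter with the Milnor fiber of $g$ inside the Milnor ball, so the former deformation-retracts onto it), and is zero on $g^{-1}(\tilde\delta)\cap(D^2_\delta\times D^2_\delta)$; then I lift this single vector field through $(f_1,f_2)$ using the fibration structures and the retracts $r_1,r_2$ to a vector field on $F_{\varepsilon,\tilde\delta}$ whose flow, after finite time, gives the retraction. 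Integrating this lifted field and verifying it stays within $B^n_\varepsilon\times B^m_\varepsilon$ — which is where the inward-pointing conditions $(\mathbf v_j(\cdot),\cdot)<0$ and the boundary behavior of $r_j$ enter — is the technical heart of the argument.
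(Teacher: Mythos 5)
Your final construction---a single deformation of the collar $g^{-1}(\tilde{\delta})\cap\bigl((D^{2}_{\delta_{1}}\times D^{2}_{\delta_{1}})\setminus \mathrm{Int}(D^{2}_{\delta}\times D^{2}_{\delta})\bigr)$ tangent to the fiber of $g$, lifted through the locally trivial fibration $(f_{1},f_{2})$ away from $\{z_{1}z_{2}=0\}$ and combined with the retractions $r_{j}$ near $\partial B^{n_{j}}_{\varepsilon}$---is exactly the paper's proof, which lifts a deformation retract $D_{t}$ of that collar to $\tilde{D}_{t}$ and composes with $\tilde{r}_{1}\times\tilde{r}_{2}$. You correctly identified and discarded the naive coordinatewise contraction, and your remaining worry about staying inside $B^{n}_{\varepsilon}\times B^{m}_{\varepsilon}$ is resolved in the paper simply by applying $\tilde{r}_{1}\times\tilde{r}_{2}$ after the lifted homotopy rather than during it.
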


\begin{proof}
Set $g^{-1}(\tilde{\delta})^{\circ} = g^{-1}(\tilde{\delta}) \cap 
\bigl((D^{2}_{\delta_{1}} \times D^{2}_{\delta_{1}}) \setminus 
\text{Int} (D^{2}_{\delta} \times D^{2}_{\delta})\bigr)$. 
Since the Milnor fibers of $g$ are transversal to small spheres, 
there exists a deformation retract 
$D_{t} : g^{-1}(\tilde{\delta})^{\circ} \rightarrow g^{-1}(\tilde{\delta})^{\circ}$ 
such that ${D}_{0} = \text{id}$ and 
$\text{Im}\>{D}_{1} \in g^{-1}(\tilde{\delta}) \cap \partial \bigl(D^{2}_{\delta} \times D^{2}_{\delta})$. 
By the local triviality of $(f_{1}, f_{2})$, 
there exists the deformation retract 
\begin{equation*}
\begin{split}
\tilde{D}_{t} : (B^{n}_{\varepsilon_{1}} \times B^{m}_{\varepsilon_{1}}) \cap 
(f_{1}, f_{2})^{-1}(g^{-1}(\tilde{\delta})^{\circ}) 
\rightarrow 
(B^{n}_{\varepsilon_{1}} \times B^{m}_{\varepsilon_{1}}) \cap 
(f_{1}, f_{2})^{-1}(g^{-1}(\tilde{\delta})^{\circ}) 
\end{split}
\end{equation*}
so that $\tilde{D}_{t}$ is the lifting of $D_{t}$, 
$\tilde{D}_{0} = \text{id}$ and  
$\text{Im}\>\tilde{D}_{1} \in (f_{1}, f_{2})^{-1}(g^{-1}(\tilde{\delta}) 
\cap \partial (D^{2}_{\delta} \times D^{2}_{\delta}))$.

Define 
$\tilde{r}_{j} : B^{n_{j}}_{\varepsilon_{1}} \cap f_{j}^{-1}(D^{2}_{\delta}) \rightarrow 
B^{n_{j}}_{\varepsilon_{1}} \cap f_{j}^{-1}(D^{2}_{\delta})$ by 
\[
\tilde{r}_{j} = \begin{cases}
                r_{j} & \lvert \xx \rvert \geq \varepsilon \\
                \text{id} & \lvert \xx \rvert \leq \varepsilon
                \end{cases}. 
\]
Then the composed map $(\tilde{r}_{1} \times \tilde{r}_{2}) \circ \tilde{D}_{t}$ 
defines a deformation retract of 
$(f_{1}, f_{2})^{-1}(D^{2}_{\delta} \times D^{2}_{\delta}) \cap F_{\varepsilon, \tilde{\delta}}$ 
in $F_{\varepsilon, \tilde{\delta}}$. 
\end{proof}

We take $0 < \varepsilon' < \varepsilon$ and $0 < \delta' < \delta$. 
Assume that $(\varepsilon', \delta')$ has the same properties of $(\varepsilon, \delta)$ and 
$\tilde{\delta}$ is sufficiently small. 
By using the above argument, we can show that 
the inclusion $(f_{1}, f_{2})^{-1}(D^{2}_{\delta'} \times D^{2}_{\delta'}) \cap F_{\varepsilon', \tilde{\delta}} 
\subset (f_{1}, f_{2})^{-1}(D^{2}_{\delta} \times D^{2}_{\delta}) \cap F_{\varepsilon, \tilde{\delta}}$ 
is a deformation retract. 
So we can show the following corollary.

\begin{corollary}
The inclusion 
$F_{\varepsilon', \tilde{\delta}} \subset F_{\varepsilon, \tilde{\delta}}$ is a homotopy equivalence.
\end{corollary}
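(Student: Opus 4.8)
The plan is to show that the inclusion $F_{\varepsilon', \tilde{\delta}} \subset F_{\varepsilon, \tilde{\delta}}$ is a homotopy equivalence by exhibiting it as a composition of maps each of which we already know to be a homotopy equivalence. By Lemma~4, applied to the pair $(\varepsilon, \delta)$, the inclusion $(f_{1}, f_{2})^{-1}(D^{2}_{\delta} \times D^{2}_{\delta}) \cap F_{\varepsilon, \tilde{\delta}} \hookrightarrow F_{\varepsilon, \tilde{\delta}}$ is a homotopy equivalence, and the same holds for the pair $(\varepsilon', \delta')$ since, by hypothesis, $(\varepsilon', \delta')$ enjoys all the properties of $(\varepsilon, \delta)$. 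The remark immediately preceding the corollary tells us that the inclusion $(f_{1}, f_{2})^{-1}(D^{2}_{\delta'} \times D^{2}_{\delta'}) \cap F_{\varepsilon', \tilde{\delta}} \hookrightarrow (f_{1}, f_{2})^{-1}(D^{2}_{\delta} \times D^{2}_{\delta}) \cap F_{\varepsilon, \tilde{\delta}}$ is a deformation retract, hence also a homotopy equivalence.

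First I would set up the commutative square of inclusions
\[
\begin{array}{ccc}
(f_{1}, f_{2})^{-1}(D^{2}_{\delta'} \times D^{2}_{\delta'}) \cap F_{\varepsilon', \tilde{\delta}} & \hookrightarrow & F_{\varepsilon', \tilde{\delta}} \\
\cap & & \cap \\
(f_{1}, f_{2})^{-1}(D^{2}_{\delta} \times D^{2}_{\delta}) \cap F_{\varepsilon, \tilde{\delta}} & \hookrightarrow & F_{\varepsilon, \tilde{\delta}},
\end{array}
\]
in which the top horizontal arrow is the Lemma~4 equivalence for $(\varepsilon', \delta')$, the bottom horizontal arrow is the Lemma~4 equivalence for $(\varepsilon, \delta)$, and the left vertical arrow is the deformation retract just recalled. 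Since three of the four edges are homotopy equivalences and the square commutes, the fourth edge, namely the right vertical inclusion $F_{\varepsilon', \tilde{\delta}} \hookrightarrow F_{\varepsilon, \tilde{\delta}}$, is a homotopy equivalence by the two-out-of-three property.

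The only point requiring a little care is that the square genuinely commutes on the nose (not merely up to homotopy), which is clear here because all four maps are literal inclusions of subspaces, so there is no ambiguity in composing them. One should also check that $\tilde{\delta}$ can be chosen small enough to serve simultaneously for both pairs $(\varepsilon, \delta)$ and $(\varepsilon', \delta')$ in the hypotheses of Lemma~4 and of the preceding remark; this is exactly the assumption ``$\tilde{\delta}$ is sufficiently small'' made when $(\varepsilon', \delta')$ was introduced, so nothing new is needed. I do not expect any serious obstacle: the substantive work was already done in establishing Lemma~4 and the deformation-retract statement for nested parameter pairs, and the corollary is a formal diagram-chase consequence of those.
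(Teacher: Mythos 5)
Your argument is correct and is essentially the paper's own: the paper derives the corollary by combining the deformation retract of $(f_{1}, f_{2})^{-1}(D^{2}_{\delta} \times D^{2}_{\delta}) \cap F_{\varepsilon, \tilde{\delta}}$ onto $(f_{1}, f_{2})^{-1}(D^{2}_{\delta'} \times D^{2}_{\delta'}) \cap F_{\varepsilon', \tilde{\delta}}$ with the lemma's equivalences for both parameter pairs, exactly as in your commutative square; you have merely made the two-out-of-three step explicit. (Note only that the lemma you invoke is Lemma~3 in the paper's numbering, the one immediately preceding the corollary.)
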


By Corollary $2$, we have

\begin{lemma}
Let $F_{f}$ be the Milnor fiber of $f$. 
Then $F_{\varepsilon, \tilde{\delta}}$ has the same homotopy type of $F_{f}$. 
\end{lemma}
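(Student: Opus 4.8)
The plan is to identify the Milnor fiber $F_f$ of $f$ with the tubular Milnor fiber and then transport that identification through the family of nested neighborhoods already set up above. Recall that Corollary~$1$ gives the genuine Milnor fibration of $f$, whose fiber I will call $F_f = B^{n+m}_{\varepsilon_0} \cap f^{-1}(\tilde\delta)$ for sufficiently small $\varepsilon_0$ and $\tilde\delta \ll \varepsilon_0$, and that its isomorphism class is independent of the choices. The goal is to show this is homotopy equivalent to $F_{\varepsilon, \tilde\delta} = f^{-1}(\tilde\delta) \cap (B^n_\varepsilon \times B^m_\varepsilon)$, which is cut out by a \emph{product} of balls rather than a single ball in $\mathbb{R}^{n+m}$.

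First I would observe that Corollary~$2$ already gives the stability of $F_{\varepsilon,\tilde\delta}$ under shrinking $(\varepsilon,\delta)$ to $(\varepsilon',\delta')$: the inclusion $F_{\varepsilon', \tilde\delta} \subset F_{\varepsilon, \tilde\delta}$ is a homotopy equivalence. So it suffices to compare $F_{\varepsilon,\tilde\delta}$ with the genuine Milnor fiber for \emph{one} admissible choice. The standard trick is to interpolate between the round ball $B^{n+m}_{\varepsilon_0}$ and the polydisc $B^n_\varepsilon \times B^m_\varepsilon$: for suitably chosen radii one has nested inclusions
\[
B^{n+m}_{\varepsilon_1} \subset B^n_{\varepsilon} \times B^m_{\varepsilon} \subset B^{n+m}_{\varepsilon_0}
\]
(the first holding when $\varepsilon_1 \le \varepsilon$ and the second when $\sqrt 2\,\varepsilon \le \varepsilon_0$), and intersecting with $f^{-1}(\tilde\delta)$ gives
\[
F_{\varepsilon_1, \tilde\delta}^{\mathrm{ball}} \subset F_{\varepsilon, \tilde\delta} \subset F_{\varepsilon_0, \tilde\delta}^{\mathrm{ball}} = F_f .
\]
Since the two outer spaces are the genuine Milnor fibers at different radii, the composite inclusion $F_{\varepsilon_1,\tilde\delta}^{\mathrm{ball}} \hookrightarrow F_{\varepsilon_0,\tilde\delta}^{\mathrm{ball}}$ is a homotopy equivalence (this is the standard well-definedness of the Milnor fiber, which follows from Theorem~$2$, Lemma~$2$ and the Ehresmann argument recorded in Corollary~$1$). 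Therefore the middle inclusion $F_{\varepsilon,\tilde\delta} \hookrightarrow F_f$ is split injective on all homotopy groups, and I then run the same squeezing the other way — a slightly smaller polydisc sits inside a genuine Milnor ball which itself sits inside $B^n_\varepsilon\times B^m_\varepsilon$ — combined with Corollary~$2$ to conclude, by the usual two-sided sandwich argument, that $F_{\varepsilon,\tilde\delta}\hookrightarrow F_f$ is a homotopy equivalence.

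The main obstacle is making the transversality and local triviality arguments robust enough that the Milnor fibration restricted over the polydisc-shaped neighborhood is still locally trivial, so that the intermediate spaces really are fibers of a fibration and the retraction arguments apply. Concretely I need: the sphere $\partial B^n_\varepsilon \times \partial B^m_\varepsilon$ (and the corner strata) to meet $f^{-1}(\eta)$ "transversally" in the stratified sense, which follows from the fact that $\varepsilon$ is a common $a_f$-stable radius for $f_1, f_2$ together with Theorem~\ref{a_f}; and the vector fields $\mathbf v_1, \mathbf v_2$ constructed above, which are tangent to the fibers of $f_j$ and inward-pointing, to be assembled into a single vector field on the polydisc that is tangent to the fibers of $f = g\circ(f_1,f_2)$ and controls the boundary behaviour — this is exactly the content implicit in conditions (1)--(3) and Lemma~$3$. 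Once that control is in place, the sandwich of homotopy equivalences closes up and the lemma follows; there is no genuinely new estimate, only careful bookkeeping of the nested radii $\varepsilon' < \varepsilon < \varepsilon_1$ and $\delta' < \delta < \delta_1 \ll \varepsilon_1$ already fixed in the discussion preceding Lemma~$3$.
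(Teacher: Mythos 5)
Your proposal is correct and takes essentially the same route as the paper: the paper's proof also interpolates a nested chain alternating polydiscs and round balls, $F_{\varepsilon_1,\tilde{\delta}}\supset f^{-1}(\tilde{\delta})\cap B^{n+m}_{\varepsilon_1}\supset F_{\varepsilon_2,\tilde{\delta}}\supset f^{-1}(\tilde{\delta})\cap B^{n+m}_{\varepsilon_2}\supset F_{\varepsilon_3,\tilde{\delta}}$, using Corollary~2 for the polydisc-to-polydisc inclusions and transversality of $f^{-1}(\tilde{\delta})$ with the spheres for the ball-to-ball inclusions, and then closes the two-sided sandwich argument (citing \cite[Proposition 1.1]{Ma} and \cite[Lemma 7]{In1}). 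There is no substantive difference between your argument and the paper's.
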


\begin{proof}
We choose sufficiently small positive real numbers $\varepsilon_{1} > \varepsilon_{2} > \varepsilon_{3}$. 
Set $F_{\varepsilon_{k}, \tilde{\delta}} = f^{-1}(\tilde{\delta}) \cap 
(B^{n}_{\varepsilon_{k}} \times B^{m}_{\varepsilon_{k}})$ for $k = 1, 2, 3$. 
By Corollary $2$, 
The inclusion 
$F_{\varepsilon_{k+1}, \tilde{\delta}} \subset F_{\varepsilon_{k}, \tilde{\delta}}$ is a homotopy equivalence
for $k =1, 2$. 
Since the fiber $f^{-1}(\tilde{\delta})$ intersects transversely with 
$B^{n+m}_{\varepsilon_{1}}, B^{n+m}_{\varepsilon_{2}}$ and $B^{n+m}_{\varepsilon_{3}}$, 
the inclusion 
$f^{-1}(\tilde{\delta}) \cap B^{n+m}_{\varepsilon_{k+1}} \subset 
f^{-1}(\tilde{\delta}) \cap B^{n+m}_{\varepsilon_{k}}$ is also a homotopy equivalence
for $k =1, 2$. 
Thus the sequence 
\begin{equation*}
\begin{split}
F_{\varepsilon_{1}, \tilde{\delta}} &\supset f^{-1}(\tilde{\delta}) \cap B^{n+m}_{\varepsilon_{1}} 
\supset 
F_{\varepsilon_{2}, \tilde{\delta}} 
\supset f^{-1}(\tilde{\delta}) \cap B^{n+m}_{\varepsilon_{2}} \supset 
F_{\varepsilon_{3}, \tilde{\delta}} \\
\end{split}
\end{equation*}
defines a homotopy equivalence 
$F_{\varepsilon, \tilde{\delta}} \rightarrow F_{f}$. 
See \cite[Proposition 1.1]{Ma} and \cite[Lemma 7]{In1}. 
\end{proof}

\begin{proof}[Proof of Theorem $3$]
Consider the following map 
\[
(f_{1}, f_{2}) : (f_{1}, f_{2})^{-1}(D^{2}_{\delta} \times D^{2}_{\delta}) \cap F_{\varepsilon, \tilde{\delta}}
\rightarrow (D^{2}_{\delta} \times D^{2}_{\delta}) \cap g^{-1}(\tilde{\delta}). 
\]
This map is locally trivial over $g^{-1}(\tilde{\delta}) \setminus \{z_{1}z_{2} = 0\}$ with 
fiber $F_{1} \times F_{2}$.

Let $D_{j}^{2}$ be a small neighborhood of a point of $g^{-1}(\tilde{\delta}) \cap \{z_{1}z_{2} = 0\}$ 
and $\gamma_{j}$ be a path from $b_{g}$ to $D_{j}^{2}$ for $j = 1, \dots, n_{1} + n_{2}$. 
Assume that 
\[
b_{g} \cap \gamma_{j} = \{\ast \},\ \  D_{j}^{2} \cap \gamma_{j} = \{ \text{a point} \} \subset \partial D_{j}^{2} 
\]
and 
\[
D_{j}^{2} \cap D_{j'}^{2} = D_{j}^{2} \cap \gamma_{j'} = \emptyset, \ \ 
\gamma_{j} \cap \gamma_{j'} = \{\ast \} 
\]
for $j = 1, \dots, n_{1} + n_{2}$ and $j \neq j'$. 
Since $(f_{1}, f_{2})$ is locally trivial over $g^{-1}(\tilde{\delta}) \setminus \{z_{1}z_{2} = 0\}$, 
by homotopy lifting property, 
$F_{\varepsilon, \tilde{\delta}}$ is homotopy equivalent to 
\[
(f_{1}, f_{2})^{-1}\Bigl(b_{g} \cup \bigl(\textstyle \bigcup_{j=1}^{n_{1} + n_{2}}D_{j}^{2}\bigr) \cup 
\bigl(\textstyle \bigcup_{j=1}^{n_{1} + n_{2}} \gamma_{j}\bigr) \Bigr). 
\]
See \cite[p. 55]{Si}. 
Let $(z_{1}, z_{2})$ be a point of $g^{-1}(\tilde{\delta}) \cap \{z_{1}z_{2} = 0\}$. 
Then we have 
\[
(f_{1}, f_{2})^{-1}(z_{1}, z_{2}) \cong 
                \begin{cases}
                \tilde{F}_{1} & (z_{1} = 0) \\
                \tilde{F}_{2} & (z_{2} = 0)
                \end{cases}.
\]
Thus $F_{f}$ has the homotopy type of a space obtained from 
$(f_{1}, f_{2})^{-1}(b_{g})$ by gluing to the fiber 
$(f_{1}, f_{2})^{-1}(\ast)$
$n_{1}$ copies of $\tilde{F}_{1}$ and $n_{2}$ copies of $\tilde{F}_{2}$. 
\end{proof}

\begin{corollary}
Let $f : (\Bbb{R}^{n} \times \Bbb{R}^{m}, \mathbf{0}_{n+m}) \rightarrow (\Bbb{R}^{2}, \mathbf{0}_{2})$ 
be the real analytic germ in Theorem $3$. 
Then the Euler characteristic of the Milnor fiber $F_{f}$ of $f$ is given by 
\[
\chi(F_{f}) = \chi(F_{g} \setminus \{z_{1}z_{2} = 0\})\chi(F_{1})\chi(F_{2}) + n_{1}\chi(F_{2}) + n_{2}\chi(F_{1}). 
\]
\end{corollary}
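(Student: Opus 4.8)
The plan is to compute $\chi(F_f)$ by using the decomposition of the Milnor fiber established in Theorem~3, namely that $F_f$ is homotopy equivalent to the space obtained from $(f_1,f_2)^{-1}(b_g)$ by gluing $n_1$ copies of $\tilde F_1 = V(f_1)\times F_2$ and $n_2$ copies of $\tilde F_2 = F_1\times V(f_2)$, each glued along a copy of the fiber $(f_1,f_2)^{-1}(\ast) \cong F_1\times F_2$. Since Euler characteristic is additive over such gluings (inclusion–exclusion for CW pairs), we will have
\[
\chi(F_f) = \chi\bigl((f_1,f_2)^{-1}(b_g)\bigr) + n_1\bigl(\chi(\tilde F_1) - \chi(F_1\times F_2)\bigr) + n_2\bigl(\chi(\tilde F_2) - \chi(F_1\times F_2)\bigr).
\]

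Next I would evaluate each term. For the first term, recall $b_g$ is a bouquet of circles homotopy equivalent to $F_g$ with $b_g\cap\{z_1z_2=0\}=\emptyset$, so over every point of $b_g$ the map $(f_1,f_2)$ is locally trivial with fiber $F_1\times F_2$; hence $(f_1,f_2)^{-1}(b_g)$ is a fiber bundle over $b_g$ with fiber $F_1\times F_2$, and $\chi$ of a fibration multiplies, giving $\chi((f_1,f_2)^{-1}(b_g)) = \chi(b_g)\chi(F_1)\chi(F_2) = \chi(F_g\setminus\{z_1z_2=0\})\chi(F_1)\chi(F_2)$. Here I use that $\chi(b_g) = \chi(F_g) = \chi(F_g\setminus\{z_1z_2=0\})$ since $b_g$ already misses the coordinate axes; strictly, $b_g\simeq F_g\setminus\{z_1z_2=0\}$ because the bouquet deformation retract of the Milnor fiber of $g$ can be taken away from the axes, which is exactly the hypothesis built into Theorem~3. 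For the gluing terms, $\chi(\tilde F_1) = \chi(V(f_1))\chi(F_2)$ and $\chi(F_1\times F_2) = \chi(F_1)\chi(F_2)$, so the coefficient of $n_1$ is $(\chi(V(f_1)) - \chi(F_1))\chi(F_2)$. Now $V(f_1)$ is contractible (it is the cone over the link, or more directly the central fiber of the Milnor fibration which is cone-like), so $\chi(V(f_1)) = 1$; similarly $\chi(V(f_2)) = 1$. Therefore the coefficient of $n_1$ becomes $(1 - \chi(F_1))\chi(F_2)$ and the coefficient of $n_2$ becomes $(1-\chi(F_2))\chi(F_1)$.

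Assembling, $\chi(F_f) = \chi(F_g\setminus\{z_1z_2=0\})\chi(F_1)\chi(F_2) + n_1(1-\chi(F_1))\chi(F_2) + n_2(1-\chi(F_2))\chi(F_1)$, which does not visibly match the claimed formula. The reconciliation comes from a standard Euler-characteristic identity for the Milnor fibration of $g$: decomposing $F_g$ into $F_g\setminus\{z_1z_2=0\}$ together with the pieces lying over the $n_1+n_2$ axis points (over $z_1=0$ one sees the Milnor fiber $F_1$ glued in, over $z_2=0$ the fiber $F_2$), one gets an additivity relation that, when substituted, collapses the bracketed $1$'s. More efficiently, I will instead regroup directly: the number of axis points $n_1$ equals the count $\#\{(0,z_2)\in B^m_\varepsilon\cap g^{-1}(\delta)\}$, and the identity $\chi(F_g\setminus\{z_1z_2=0\}) = \chi(F_g) - n_1 - n_2$ (each removed point has a contractible transverse slice, or rather the link of each axis point contributes trivially up to the recorded count) lets one rewrite the first term and absorb the correction, yielding exactly $\chi(F_f) = \chi(F_g\setminus\{z_1z_2=0\})\chi(F_1)\chi(F_2) + n_1\chi(F_2) + n_2\chi(F_1)$. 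The main obstacle is getting this bookkeeping of axis points exactly right — precisely which Euler-characteristic count each of the $n_1$, $n_2$ punctures contributes and why the slice Euler characteristics are what make the two forms of the answer agree; once that combinatorial identity is pinned down the rest is purely formal additivity of $\chi$.
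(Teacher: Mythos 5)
Your overall strategy is the right one, and it is essentially the only proof the paper has in mind: the corollary is stated without argument immediately after Theorem~3, and it follows from the decomposition there by additivity of the Euler characteristic exactly as you set it up. Your additivity formula, the multiplicativity $\chi((f_1,f_2)^{-1}(b_g))=\chi(b_g)\chi(F_1)\chi(F_2)$ over the bouquet, the identifications $\chi(\tilde F_1)=\chi(V(f_1))\chi(F_2)$, and the use of the local conic structure to get $\chi(V(f_j))=1$ are all correct, and your final substitution does produce the stated formula.

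However, the middle of your argument contains a genuine inconsistency that you should remove. You assert that $\chi(b_g)=\chi(F_g)=\chi(F_g\setminus\{z_1z_2=0\})$ and that ``$b_g\simeq F_g\setminus\{z_1z_2=0\}$ because the bouquet deformation retract of the Milnor fiber of $g$ can be taken away from the axes.'' This is false whenever $n_1+n_2>0$: the hypothesis of Theorem~3 is only that $b_g\simeq F_g$ and that $b_g$ is disjoint from the axes as a subset; $F_g\setminus\{z_1z_2=0\}$ is the surface $F_g$ with $n_1+n_2$ points removed, so its first Betti number is strictly larger and $\chi(F_g\setminus\{z_1z_2=0\})=\chi(F_g)-(n_1+n_2)$. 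Indeed you invoke exactly this identity two sentences later, contradicting the earlier claim. The clean chain is: $\chi(b_g)=\chi(F_g)$ (from $b_g\simeq F_g$); assemble
\[
\chi(F_f)=\chi(F_g)\chi(F_1)\chi(F_2)+n_1\bigl(1-\chi(F_1)\bigr)\chi(F_2)+n_2\bigl(1-\chi(F_2)\bigr)\chi(F_1);
\]
then substitute $\chi(F_g)=\chi(F_g\setminus\{z_1z_2=0\})+n_1+n_2$, which holds simply because $F_g$ is a real two-dimensional manifold and $F_g\cap\{z_1z_2=0\}$ consists of exactly $n_1+n_2$ points, each of whose removal lowers $\chi$ by one. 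The cross terms $\pm n_j\chi(F_1)\chi(F_2)$ then cancel and the stated formula drops out. Your closing remarks about ``contractible transverse slices'' and ``links contributing trivially'' are not needed and obscure this elementary point; with the false intermediate claim deleted and the puncture count justified as above, the proof is complete.
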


\begin{remark}
Let $g$ be a $2$-variable strongly non-degenerate mixed polynomial which 
is locally tame along vanishing coordinate subspaces. 
Assume that $g' := g|_{z_{1} = 0} = \textstyle \sum_{\nu, \mu}c_{\nu, \mu} z_{2}^{\nu}\bar{z}_{2}^{\mu} \not\equiv 0$. 
Put $g'_{\ell} = \textstyle \sum_{\nu + \mu = \ell}c_{\nu, \mu} z_{2}^{\nu}\bar{z}_{2}^{\mu}$. Then we can write 
\begin{equation*}
\begin{split}
g' &= g'_{\underline{d}} + \cdots + g'_{\overline{d}}, \\
g'_{\underline{d}} &= cz_{2}^{a}\bar{z}_{2}^{b}\textstyle \prod_{j = 1}^{s}(z_{2} + \delta_{k}\bar{z}_{2})^{\mu_{k}}
\end{split}
\end{equation*}
where $\underline{d} = \min \{ \nu + \mu \mid c_{\nu, \mu} \neq 0 \}$ and 
$\overline{d} = \max \{ \nu + \mu \mid c_{\nu, \mu} \neq 0 \}$. 
Suppose that all zero points of $g'$ are regular points of $g'$ and 
$\lvert \delta_{k} \rvert > 1$ for $k = 1, \dots s$. 
By \cite[Theorem 20]{O4}, the number of $g'^{-1}(0) \cap D^{2}_{\delta}$ is equal to 
$a- b + \sum_{k = 1}^{s}\mu_{k}$. 

\end{remark}

\subsection{Spherical Milnor fibrations}
Let $P$ be a real analytic germ which satisfies the conditions (a-i) and (a-ii). 
We assume that $P$ satisfies 
the following condition: 
\begin{enumerate}[({a}-i)]
\setcounter{enumi}{2}
\item
there exists a positive real number $r'$ such that 
\[
P/\lvert P\rvert : \partial B^{N}_{r} \setminus K_{P} \rightarrow S^{p-1}
\]
is a locally trivial fibration and this fibration is isomorphic to the tubular Milnor fibration of $P$, 
where $K_P = \partial B^{N}_{r} \cap P^{-1}(0)$ and $0 < r \leq r'$ 
\end{enumerate}
The fibration in (a-iii) is called \textit{the spherical Milnor fibration of $P$}.

\begin{corollary}
Let $f_{1} : (\Bbb{R}^{n}, \mathbf{0}_{n}) \rightarrow (\Bbb{R}^{p}, \mathbf{0}_{p})$ 
and $f_{2} : (\Bbb{R}^{m}, \mathbf{0}_{m}) \rightarrow (\Bbb{R}^{p}, \mathbf{0}_{p})$ 
be analytic germs in Theorem $3$. 
Assume that $f_{1}, f_{2}$ and 
$f = g(f_{1}, f_{2})$ satisfy the condition (a-iii). 
Set $\overline{F}_{j}$ is the fiber of the spherical Milnor fibration of $f_{j}$ for $j = 1, 2$. 
Then the fiber of the spherical Milnor fibration of $f$ is 
homotopy equivalent to 
a space obtained from $(f_{1}, f_{2})^{-1}(b_{g})$ 
by gluing to $(f_{1}, f_{2})^{-1}(\ast)$ $n_{1}$~copies of $\overline{F}_{1}$ and $n_{2}$ copies of 
$\overline{F}_{2}$, where 
$n_{1}$ is the number of $\{ (0, z_{2}) \in B^{m}_{\varepsilon} \cap g^{-1}(\delta) \}$ and 
$n_{2}$ is the number of $\{ (z_{1}, 0) \in B^{n}_{\varepsilon} \cap g^{-1}(\delta) \}$. 
\end{corollary}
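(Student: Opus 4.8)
The plan is to deduce this corollary from Theorem $3$ by replacing the tubular Milnor fiber with the spherical Milnor fiber at every stage of the argument. First I would recall that, by hypothesis, $f_1$, $f_2$ and $f$ all satisfy condition (a-iii), so the spherical Milnor fibration of each is isomorphic to its tubular Milnor fibration. In particular $\overline{F}_j$ is homotopy equivalent to $F_j$ for $j=1,2$, and the fiber of the spherical Milnor fibration of $f$ is homotopy equivalent to the tubular Milnor fiber $F_f$ computed in Theorem $3$. Thus the combinatorial data $n_1, n_2$ and the bouquet $b_g$ are unchanged, and the statement to be proved differs from Theorem $3$ only in the names of the pieces being glued.

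The key steps, in order, are as follows. First, record the homotopy equivalences $\overline{F}_j \simeq F_j$ coming from (a-iii), together with the compatible identifications $V(f_j) \simeq V(f_j)$ (the link data is intrinsic), so that $\widetilde{F}_j = V(f_j)\times F_j \simeq V(f_j)\times \overline{F}_j$. Second, rerun the proof of Theorem $3$: the map $(f_1,f_2)\colon (f_1,f_2)^{-1}(D^2_\delta\times D^2_\delta)\cap F_{\varepsilon,\tilde\delta}\to (D^2_\delta\times D^2_\delta)\cap g^{-1}(\tilde\delta)$ is locally trivial over $g^{-1}(\tilde\delta)\setminus\{z_1z_2=0\}$ with fiber $F_1\times F_2$, and under the isomorphism of tubular and spherical fibrations this fiber is identified (up to homotopy) with $\overline{F}_1\times\overline{F}_2$. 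Third, over the finitely many points of $g^{-1}(\tilde\delta)\cap\{z_1z_2=0\}$ the fiber of $(f_1,f_2)$ is $\widetilde{F}_1$ or $\widetilde{F}_2$ as before, and these are homotopy equivalent to $V(f_1)\times\overline{F}_2$, resp. $\overline{F}_1\times V(f_2)$. Fourth, the gluing construction over $b_g\cup\bigcup D_j^2\cup\bigcup\gamma_j$ carried out in the proof of Theorem $3$ applies verbatim, producing the space obtained from $(f_1,f_2)^{-1}(b_g)$ by attaching $n_1$ copies of the $\overline{F}_1$-version of $\widetilde{F}_1$ and $n_2$ copies of the $\overline{F}_2$-version of $\widetilde{F}_2$; since the fiber of the spherical Milnor fibration of $f$ is homotopy equivalent to $F_f$, this is the desired description.

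The main obstacle I anticipate is making the identification between tubular and spherical fibers functorial enough that it is compatible with the local trivializations used in Theorem $3$: one must check that the isomorphism of the tubular and spherical Milnor fibrations of $f_j$ provided by (a-iii) can be chosen to commute (up to homotopy) with restriction to the subsets $(f_1,f_2)^{-1}(D^2_j)$ and with the monodromy data along the paths $\gamma_j$, so that the gluing performed at the level of tubular fibers transfers cleanly. This is essentially a naturality statement for the equivalence between the two fibrations, and it is exactly the content one needs from \cite{O1, O2} in the mixed-function setting; once that is in hand the corollary is a formal consequence of Theorem $3$. A secondary point, minor by comparison, is to note that $V(f_j)$ — which appears unchanged in $\widetilde{F}_j$ — is genuinely the same object in both pictures, since it is the zero set and does not depend on whether one uses the tube or the sphere to cut it out.
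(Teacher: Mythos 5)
Your proposal is correct and takes essentially the same route as the paper: the paper's proof likewise consists of invoking Theorem 3 together with condition (a-iii) to identify the spherical fiber of $f$ with the tubular fiber $F_f$, and then using the diffeomorphisms $F_j \cong \overline{F}_j$ (plus the conic structure of $V(f_j)$) to rename the glued pieces. Your attention to the naturality of these identifications with respect to the local trivializations is more care than the paper itself records --- its entire proof is two sentences --- so there is nothing missing in your argument.
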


\begin{proof}
By Theorem $3$ and the condition (a-iii), the fiber of the spherical Milnor fibration of $f$ 
is homotopy equivalent to 
a space obtained from $(f_{1}, f_{2})^{-1}(b_{g})$ 
by gluing to $(f_{1}, f_{2})^{-1}(\ast)$ $n_{1}$ copies of $\tilde{F}_{1}$ and $n_{2}$ copies of 
$\tilde{F}_{2}$. 
Since $F_{j}$ is diffeomorphic to $\overline{F}_{j}$, 
$\tilde{F}_{1}$ and $\tilde{F}_{2}$ are homotopy equivalent to $\overline{F}_{2}$ and $\overline{F}_{1}$ 
respectively. 
This completes the proof. 
\end{proof}

\section{Zeta functions of monodromies of Milnor fibrations}
We assume that a real analytic germ $P : (\Bbb{R}^{2n}, \mathbf{0}_{2n}) \rightarrow (\Bbb{R}^{2}, \mathbf{0}_{2})$ 
satisfies the conditions (a-i) and (a-ii). 
Let $F_{P}$ be the fiber of the Milnor fibration of $P$. 
Set $P_{j}(\lambda) = \det (\text{Id} - \lambda h_{*, j})$, where 
$h_{*, j} : H_{j}(F_{P}, \Bbb{C}) \rightarrow H_{j}(F_{P}, \Bbb{C})$ is an isomorphism induced by 
the monodromy of $P$ for $j \geq 0$. 
Then the \textit{zeta function $\zeta(\lambda)$ of the monodromy} is defined by 
\[
\zeta(\lambda) = \textstyle \prod_{j=0}^{2n-2}P_{j}(\lambda)^{(-1)^{j+1}}.
\]
See \cite[Section 9]{M2} and \cite[Chapter I]{O0}. 

In this section, we study the zeta function of the monodromy of $f$, 
where $f$ is a real analytic germ in Theorem~$3$. 
Let $g$ be a $2$-variable strongly non-degenerate mixed polynomial which 
is locally tame along vanishing coordinate subspaces. 
We denote $D = \{ (z_{1}, z_{2}) \mid z_{1}z_{2} = 0\} \subset D^{2}_{\delta} \times D^{2}_{\delta}$. 
Take a sufficiently small positive real numbers $\delta$ and $\tilde{\delta}$ such that 
and $\tilde{\delta} \ll \delta$. 
Consider the following pairs of maps 
\begin{equation*}
\begin{split}
&g : (B^{4}_{\delta} \cap g^{-1}(B_{\tilde{\delta}}), 
D \cap g^{-1}(B^{2}_{\tilde{\delta}})) \rightarrow \partial B^{2}_{\tilde{\delta}}, \\
&g/\lvert g\rvert : (\partial B^{4}_{\delta} \setminus g^{-1}(0), 
(\partial B^{4}_{\delta} \cap D) \setminus g^{-1}(0)) \rightarrow S^{1}. 
\end{split}
\end{equation*}
By \cite[Theorem 10]{O2}, Oka showed that the spherical and the tubular Milnor fibrations of $g$ are equivalent. 
Since $g$ satisfies $a_{f}$-condition and $\tilde{\delta}$ is sufficiently small, 
the fibers of two maps meet transversely spheres and $D$. 
So the above maps 
are locally trivial fibrations. Moreover the fibrations are fiber isomorphic.

Let $\ast$ be a point of $F_{g} \setminus D$, 
where $F_{g}$ is the Milnor fiber of $g$. 
By using the above fibrations, we have the sequence of groups: 
\[
1 \rightarrow \pi_{1}(F_{g} \setminus D, \ast) \xrightarrow{i_{\ast}} 
\pi_{1}(S^{3}_{\delta} \setminus (g^{-1}(0) \cup D), \ast) 
\xrightarrow{(g/\lvert g\rvert)_{\ast}} \Bbb{Z} \rightarrow 1, 
\]
where $i$ is an inclusion 
$F_{g} \setminus D \hookrightarrow S^{3}_{\delta} \setminus (g^{-1}(0) \cup D$). 
Consider 
\[ 
A^{q} = H^{q}(F_{1} \times F_{2}, \Bbb{C}), \ \ 
G = \pi_{1}(S^{3}_{\delta} \setminus (g^{-1}(0) \cup D), \ast), \ \ 
H = \pi_{1}(F_{g} \setminus D, \ast)
\] 
for $q \geq 0$. 
Since the restricted map $(f_{1} \times f_{2}) : 
(B^{n}_{\varepsilon} \times B^{m}_{\varepsilon}) \cap (f_{1} \times f_{2})^{-1}(B^{4}_{\delta} \setminus D) 
\rightarrow B^{4}_{\delta} \setminus D$ is a locally trivial fibration, we have a monodromy representation 
\[
\rho : \pi_{1}(B^{4}_{\delta} \setminus D) = 
\pi_{1}(S^{3}_{\delta} \setminus  D) = \Bbb{Z}^{2} \rightarrow 
\mathbf{Aut}(A^{q}). 
\]
The generators of $\Bbb{Z}^{2}$ are chosen so that 
$(1, 0)$ and $(0, 1)$ are meridians of the link components $\{z_{1} = 0\}$ and 
$\{z_{2} = 0\}$ respectively. 
By the inclusion $S^{3}_{\delta} \setminus (g^{-1}(0) \cup D) \hookrightarrow 
S^{3}_{\delta} \setminus D$, 
$A^{q}$ becomes 
a $G$-module, and by $i_{\ast}$ a $H$-module. 
Set 
\begin{equation*}
\begin{split}
\text{Der}(H, A^{q}) &= 
\{ d :H \rightarrow A^{q} \mid d(h_{1}h_{2}) = d(h_{1}) + d(h_{1})h_{2} \text{ for all } h_{1}, h_{2} \in H \}, \\
H^{0}(H, A^{q}) &= (A^{q})^{H}, \ \  
H^{1}(H, A^{q}) = \text{Der}(H, A^{q})/\text{Im}\>\delta, 
\end{split}
\end{equation*} 
where $\delta : A^{q} \rightarrow \text{Der}(H, A^{q})$ is defined by 
$\delta(a)(k) = \rho(k)(a) - a$ for $a \in A^{q}$ and $k \in H$. 
Let $h \in G$ such that $(g/\lvert g\rvert)_{\ast}(h) = 1$. 
The automorphism $c_{h} : H \rightarrow H$ is defined by 
$c_{h}(k) = h^{-1}kh$ for $k \in H$. 
Then the maps $\rho(h) : A^{q} \rightarrow A^{q}$ and 
$c_{h} : H \rightarrow H$ induces an automorphism of the exact sequence of 
$\Bbb{C}$-vector spaces: 
\def\mapright#1{\smash{\mathop{\longrightarrow}\limits^{{#1}}}}
\def\mapdown#1{\Big\downarrow\rlap{$\vcenter{\hbox{$#1$}}$}}
\[
\begin{matrix}
0 \rightarrow H^{0}(H, A^{q}) &\rightarrow A^{q} &\xrightarrow{\delta} \text{Der}(H, A^{q}) &\rightarrow H^{1}(H, A^{q}) &\rightarrow 0 \\
\mapdown{h_{0}^{\ast}}& \mapdown{\rho({h})}& \mapdown{h_{\text{Der}}}& \mapdown{h_{1}^{\ast}}& \\
0 \rightarrow H^{0}(H, A^{q}) &\rightarrow A^{q} &\xrightarrow{\delta} \text{Der}(H, A^{q}) &\rightarrow H^{1}(H, A^{q}) &\rightarrow 0, 
\end{matrix}
\]
where 
$h_{j}^{\ast} : H^{j}(H, A^{q}) \rightarrow H^{j}(H, A^{q})$ is the automorphism induced by 
$\rho(h)$ and $c_{h}$ for $j = 1, 2$. 

Set $\Delta_{h}(\lambda) = \det(1 - \lambda \rho(h))$ 
and $\Delta_{\text{Der}}(\lambda) = \det(1 - \lambda h_{\text{Der}})$. 
Then by the above exact sequence, 
$h_{\text{Der}}$ is determined by $\rho(h)$ and $c_{h}$. 
So we define 
\begin{equation*}
\begin{split}
(\zeta_{g, D}(\lambda))_{q}^{(-1)^{q}} &= \det(1 - \lambda h_{0}^{\ast})/\det(1 - \lambda h_{1}^{\ast}) \\
                                   &= \Delta_{h}(\lambda)/\Delta_{\text{Der}}(\lambda). 
\end{split}
\end{equation*}
The automorphisms $h_{0}^{\ast}$ and $h_{1}^{\ast}$ do not depend on the choice of $h$. 
See \cite[p. 116]{Se}. 
Thus $(\zeta_{g, D})_{q}$ is well-defined.


Note that $H$ is a free group. 
Let $b_{1}, \dots, b_{\mu(g,D)}$ be generators of $H$. 
By using the map 
\[
\text{Der}(H, A^{q}) \rightarrow (A^{q})^{\mu(g,D)}, 
\delta \mapsto (\delta(b_{1}), \dots, \delta(b_{\mu(g,D)})), 
\]
$\text{Der}(H, A^{q})$ can be identified with $(A^{q})^{\mu(g,D)}$. 

Let $\tilde{i}_{\ast} : \Bbb{Z}[H] \rightarrow \Bbb{Z}[G]$ be
the homomorphism induced by $i_{\ast} $ and 
$\tilde{\rho} : \Bbb{Z}[G] \rightarrow \Bbb{Z}[\mathbf{Aut} A^{q}]$ be
the homomorphism induced by $\rho$. 
The homomorphism of rings 
$s : \Bbb{Z}[\mathbf{Aut} A^{q}] \rightarrow \mathbf{End}_{\Bbb{C}}A^{q}$ is defined by 
\[
s(\textstyle\sum_{i}c_{i}[a^{i}_{jk}]) = [\textstyle\sum_{i}c_{i}a^{i}_{jk}]_{jk}. 
\]
Let $\frac{\partial}{\partial b_{j}} : \Bbb{Z}[H] \rightarrow \Bbb{Z}[H]$ be the derivation 
determined by $\frac{\partial b_{i}}{\partial b_{j}} = \delta_{ij}$ 
for $1 \leq i, j \leq \mu(g,D)$. 
We denote $c_{h}(b_{i})$ by $w_{i}$. 
Note that $h_{\text{Der}}$ is determined by $\rho(h)$ and $c_{h}$. 
By using the derivation rule, we have 
\[
[h_{\text{Der}}] = 
\Bigl[s \circ \tilde{\rho}\Bigl(h \cdot \tilde{i}_{\ast}\Bigl(\frac{\partial w_{i}}{\partial b_{j}}\Bigr)\Bigr)\Bigr]_{ij}. 
\]
See \cite[p. 73]{N1}. 
We set $K_{j} = S^{3}_{\delta} \cap \{z_{j} = 0\}$ for $j = 1, 2$. 
Consider the multilink 
\[
(S^{3}_{\delta}, S^{3}_{\delta} \cap g^{-1}(0)) 
= (S^{3}_{\delta}, m_{1}K_{1} \cup m_{2}K_{2} \cup m_{3}K_{3} \cup \cdots \cup m_{r}K_{r}), 
\]
where $K_{j}$ is an oriented knot and 
$m_{j} \in \Bbb{Z}$ for $ 1 \leq j \leq r$. 
Note that $m_{j} = 0$ if and only if $g|_{z_{j}=0} \not\equiv 0$ for $1 \leq j \leq 2$. 
Put $L = (S^{3}_{\delta}, K_{1} \cup K_{2} \cup K_{3} \cup \cdots \cup K_{r})$. 
Then $(\zeta_{h, D})_{q}$ can be calculated by the Alexander polynomial of $L$ \cite{N1}. 
We can show the following theorem. 

\begin{theorem}
Let $f_{1}, f_{2}$ and $g$ be real analytic germs in Theorem $3$. 
Let $H_{j, k} : H_{k}(F_{j}, \Bbb{C}) \rightarrow H_{k}(F_{j}, \Bbb{C})$ 
be the monodromy matrix induced by the monodromy of $f_{j}$ for $j = 1, 2$ and $k \geq 0$. 
Set $E_{q, 1} = \bigoplus_{i + j = q}(H_{1, i}) \otimes (I_{2})_{j}$ and 
$E_{q, 2} = \bigoplus_{i + j = q}(I_{1})_{i} \otimes (H_{2, j})$, 
where 
$(I_{l})_{k} : H_{k}(F_{l}, \Bbb{C}) \rightarrow H_{k}(F_{l}, \Bbb{C})$ is the 
identity matrix for $l = 1, 2$ and $k \geq 0$. 
Then 
up to multiplication by monomials~$\pm \lambda^{u}$,
the zeta function of $f = g(f_{1}, f_{2})$ is determined by 
\[
\zeta_{f}(\lambda) = \zeta_{f_{1}}(\lambda^{n_{2}})\zeta_{f_{2}}(\lambda^{n_{1}})
\textstyle \prod_{q} \det \Delta_{L}(\lambda^{m_{1}}E_{q,1}, \lambda^{m_{2}}E_{q,2}, 
\lambda^{m_{3}}I, \dots, \lambda^{m_{r}}I)^{(-1)^q}, 
\]
where $\Delta_{L}(\lambda_{1}, \dots, \lambda_{r})$ is the Alexander polynomial of $L$. 
If $n_{j} = 0$, then we set $\zeta_{f_{(j+1)\bmod 2}}(\lambda^{n_{j}}) = 1$ for $j = 1, 2$. 
\end{theorem}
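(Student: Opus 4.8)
The plan is to combine the geometric decomposition of $F_{f}$ underlying Theorem $3$ with the multiplicativity of the monodromy zeta function, and then to identify the three resulting factors with the three terms in the statement. Using Theorem $2$ (so that $f$ admits a genuine Milnor fibration) together with the homotopy identifications of Section $4$, write $F_{f}\simeq(f_{1},f_{2})^{-1}(F_{g})$ and stratify the Milnor fibre $F_{g}$ of $g$ into the three pieces $F_{g}\setminus D$, $F_{g}\cap\{z_{1}=0\}$ and $F_{g}\cap\{z_{2}=0\}$, where $D=\{z_{1}z_{2}=0\}$; the last two strata are disjoint, since $F_{g}\cap\{z_{1}=z_{2}=0\}=\emptyset$, and they consist of the $n_{1}$, resp.\ $n_{2}$, points of the statement. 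Thicken the zero-dimensional strata to small disks $N_{1},N_{2}\subset F_{g}$ and put $U=F_{g}\setminus(\text{the shrunken disks})$. By the results of Section $2$ the monodromy of $g$ preserves the subspaces $\{z_{i}=0\}$ (these are strata of $\mathcal{S}_{can}$), so the disks may be chosen invariant, and the pull-backs $(f_{1},f_{2})^{-1}(U)$, $(f_{1},f_{2})^{-1}(N_{i})$, $(f_{1},f_{2})^{-1}(U\cap N_{i})$ form a Mayer--Vietoris covering of $F_{f}$ invariant under the monodromy $h$ of $f$ (which covers the monodromy of $g$, as $f$ is the pull-back of $g$ along $(f_{1},f_{2})$ over the complement of $D$). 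Writing $\zeta_{U},\zeta_{N_{i}},\zeta_{U\cap N_{i}}$ for the zeta functions of $h$ on these spaces, the long exact Mayer--Vietoris sequence yields, up to $\pm\lambda^{u}$,
\[
\zeta_{f}(\lambda)=\frac{\zeta_{U}(\lambda)\,\zeta_{N_{1}}(\lambda)\,\zeta_{N_{2}}(\lambda)}{\zeta_{U\cap N_{1}}(\lambda)\,\zeta_{U\cap N_{2}}(\lambda)},
\]
and it remains to evaluate the four factors.

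For $\zeta_{U}$ I use the apparatus of Section $5$. The space $(f_{1},f_{2})^{-1}(U)\simeq(f_{1},f_{2})^{-1}(F_{g}\setminus D)$ is a fibre bundle over $F_{g}\setminus D$ with fibre $F_{1}\times F_{2}$, and by the K\"unneth theorem the monodromy representation $\rho:\pi_{1}(S^{3}_{\delta}\setminus D)=\Bbb{Z}^{2}\to\mathbf{Aut}(A^{q})$, $A^{q}=H^{q}(F_{1}\times F_{2})$, sends the meridian of $\{z_{1}=0\}$ to $E_{q,1}$, that of $\{z_{2}=0\}$ to $E_{q,2}$, and the meridian of every other component $K_{k}$, $k\ge3$, to the identity, since $(f_{1},f_{2})$ extends to a trivial fibration across $K_{k}\setminus D$. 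The monodromy of $f$ corresponds to the homomorphism $H_{1}(S^{3}_{\delta}\setminus g^{-1}(0))\to\Bbb{Z}$, $\mu_{K_{k}}\mapsto m_{k}$, where $m_{k}$ is the vanishing order of $g$ along $K_{k}$. Feeding $\rho$ and this homomorphism into the exact sequence of Section $5$, using the Fox-calculus identity $[h_{\text{Der}}]=\Bigl[s\circ\tilde{\rho}\Bigl(h\cdot\tilde{i}_{\ast}\Bigl(\frac{\partial w_{i}}{\partial b_{j}}\Bigr)\Bigr)\Bigr]_{ij}$ to express $h_{\text{Der}}$ through $\rho$ and the conjugation $c_{h}$, and invoking N\'{e}methi's identification \cite{N1} of the quotient $(\zeta_{g,D}(\lambda))_{q}^{(-1)^{q}}=\Delta_{h}(\lambda)/\Delta_{\text{Der}}(\lambda)$ with the multivariable Alexander polynomial of $L=(S^{3}_{\delta},K_{1}\cup\cdots\cup K_{r})$ specialized at $t_{k}\mapsto\lambda^{m_{k}}\rho(\mu_{K_{k}})$, one obtains
\[
\zeta_{U}(\lambda)=\textstyle\prod_{q}\det\Delta_{L}(\lambda^{m_{1}}E_{q,1},\lambda^{m_{2}}E_{q,2},\lambda^{m_{3}}I,\dots,\lambda^{m_{r}}I)^{(-1)^{q}}
\]
up to $\pm\lambda^{u}$.

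For the two factors $\zeta_{N_{i}}$ I argue as in \cite{In1} and in N\'{e}methi's complex case. The disk $N_{2}$ is a tubular neighbourhood in $F_{g}$ of the Milnor fibre of the one-variable mixed function $g|_{z_{2}=0}$, which consists of the $n_{2}$ points; hence $(f_{1},f_{2})^{-1}(N_{2})\simeq\bigsqcup^{n_{2}}\tilde{F}_{2}$ with $\tilde{F}_{2}=F_{1}\times V(f_{2})\simeq F_{1}$, since $V(f_{2})$ is contractible. Here $h$ restricts to the monodromy of $g|_{z_{2}=0}$, which cyclically permutes the $n_{2}$ points---the Milnor fibration of a one-variable mixed function with isolated singularity is a connected cyclic covering of $S^{1}$---together with $f_{1}$-parallel transport, and the transport once around the full $n_{2}$-cycle winds once about $0$ in the $z_{1}$-plane, hence equals the monodromy of $f_{1}$. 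A cyclic permutation of $n_{2}$ blocks with return map $h_{f_{1}}$ has zeta function $\zeta_{f_{1}}(\lambda^{n_{2}})$, so $\zeta_{N_{2}}(\lambda)=\zeta_{f_{1}}(\lambda^{n_{2}})$ and symmetrically $\zeta_{N_{1}}(\lambda)=\zeta_{f_{2}}(\lambda^{n_{1}})$; when $g|_{z_{i}=0}\equiv0$ the corresponding stratum is empty, $n_{i}=0$, and the factor is set to $1$, as in the statement. Finally each $(f_{1},f_{2})^{-1}(U\cap N_{i})$ is, up to homotopy, $n_{i}$ copies of a product of a Milnor fibre with a mapping torus, which has zero Euler characteristic; a direct computation---again using that $m_{i}=0$ whenever $n_{i}>0$, so that the monodromy does not wind about $\{z_{i}=0\}$---shows $\zeta_{U\cap N_{i}}(\lambda)=1$ up to $\pm\lambda^{u}$. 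Substituting the four evaluations into the Mayer--Vietoris formula gives the asserted equality.

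The main obstacle is the bookkeeping in the central factor $\zeta_{U}$: one must pin down $\rho$ and the conjugation $c_{h}$ precisely enough to apply the Fox-calculus / twisted-Alexander identification, in particular verifying that only the two coordinate components $K_{1},K_{2}$ carry non-scalar coefficients (so that the $k\ge3$ slots of $\Delta_{L}$ reduce to the scalars $\lambda^{m_{k}}I$) and that the $\lambda$-weights are exactly the multilink multiplicities $m_{k}$. Secondary points requiring care are the $h$-equivariance of the Mayer--Vietoris decomposition in the real-analytic setting (using Theorem $2$ and the transversality to small spheres established in Sections $3$--$4$) and the vanishing of the mapping-torus intersection terms via the zero-Euler-characteristic and zero-winding argument above. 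Throughout, the $\pm\lambda^{u}$ indeterminacy---inherited from the non-canonicity of the Alexander polynomial and from the degree shifts in the Wang-type exact sequences---is carried along, which is why the statement holds only up to monomials.
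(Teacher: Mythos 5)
Your proposal is correct and follows essentially the same route as the paper: the same monodromy-invariant decomposition of the fibre into the bundle part over $F_{g}\setminus D$ and tubes around the points of $F_{g}\cap D$, the same multiplicativity $\zeta_{f}=\zeta_{\mathcal{T}}\cdot\zeta_{\mathcal{CT}}\cdot(\zeta_{\partial\mathcal{T}})^{-1}$, the same Leray--Serre/group-cohomology identification of the central factor with N\'{e}methi's $(\zeta_{g,D})_{q}$ and hence with the multivariable Alexander polynomial of $L$, and the same cyclic-permutation computation giving $\zeta_{f_{1}}(\lambda^{n_{2}})\zeta_{f_{2}}(\lambda^{n_{1}})$. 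The only differences are cosmetic (two separate disks $N_{1},N_{2}$ instead of one tube $T$, and a sketched mapping-torus argument for the triviality of the overlap term where the paper cites \cite{N1}).
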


\begin{remark}
Let $L_{j}$ be the link obtained form $L$ by reversing the orientation of $K_{j}$. 
Then two Alexander polynomials satisfy 
\[
\Delta_{L}(\lambda_{1}, \dots, \lambda_{r}) = 
\epsilon \lambda^{u'}_{j}\Delta_{L_{j}}(\lambda_{1}, \dots, \lambda^{-1}_{j}, \dots \lambda_{r}), 
\]
where $\epsilon = \pm 1$ and $u' \in \Bbb{Z}$. 
We denote the link reversing orientation of $K_{j}$ by $-K_{j}$. 
Then the associated multiplicity of $-K_{j}$ is $-m_{j}$. 
Thus up to multiplication by monomials, we have 
\begin{equation*}
\begin{split}
&\det \Delta_{L}(\lambda^{m_{1}}E_{q,1}, \lambda^{m_{2}}E_{q,2}, 
\lambda^{m_{3}}I, \dots, \lambda^{m_{j}}I, \dots   \lambda^{m_{r}}I) \\
= &\det \Delta_{L_{j}}(\lambda^{m_{1}}E_{q,1}, \lambda^{m_{2}}E_{q,2}, 
\lambda^{m_{3}}I, \dots, \lambda^{-m_{j}}I, \dots, \lambda^{m_{r}}I)
\end{split}
\end{equation*}
for any $q \geq 0$. 
\end{remark}

\begin{proof}[Proof of Theorem $5$]
Let $Q_{\lvert \tilde{\delta} \rvert} = g^{-1}(\partial D^{2}_{\tilde{\delta}}) \cap B^{4}_{\delta}$ and 
$E_{\lvert \tilde{\delta} \rvert} = (f_{1}, f_{2})^{-1}(Q_{\lvert \tilde{\delta} \rvert}) \cap B^{n+m}_{\varepsilon}$. 
By using Theorem $3$, 
we can show that there exists a deformation retract 
\[
\Phi_{t} : f^{-1}(\partial D^{2}_{\tilde{\delta}}) \cap B^{n+m}_{\varepsilon} \rightarrow 
f^{-1}(\partial D^{2}_{\tilde{\delta}}) \cap B^{n+m}_{\varepsilon}
\]
such that $\Phi_{0} = \text{id}, 
\text{Im}\>{\Phi}_{1} = E_{\lvert \tilde{\delta} \rvert}$ 
and $\Phi_{t}$ 
preserves the fiber of $f$ for $0 \leq t \leq 1$. 
Then $f|_{E_{\lvert \tilde{\delta} \rvert}}$ and  
$f|_{f^{-1}(\partial D^{2}_{\tilde{\delta}}) \cap B^{n+m}_{\varepsilon}}$ 
satisfy 
\[
f|_{E_{\lvert \tilde{\delta} \rvert}} = g \circ (f_{1}, f_{2})|_{E_{\lvert \tilde{\delta} \rvert}} = 
f|_{f^{-1}(\partial D^{2}_{\tilde{\delta}}) \cap B^{n+m}_{\varepsilon}} \circ \Phi_{1}|_{E_{\lvert \tilde{\delta} \rvert}} . 
\]
So the monodromy of $f$ can be identified to the monodromy of $f|_{E_{\lvert \tilde{\delta} \rvert}}$. 
Since $(f_{1}, f_{2})$ satisfies $a_{f}$-condition, 
by Thom's second isotopy lemma, 
\[
(f_{1}, f_{2}) : E_{\lvert \tilde{\delta} \rvert} \rightarrow Q_{\lvert \tilde{\delta} \rvert}
\] 
is a locally trivial fibration. 
Thus 
the monodromy of $f$ can be regarded as a lifting of the monodromy of $g$.

Let $T$ be a small neighborhood of $Q_{\lvert \tilde{\delta} \rvert} \cap D$ 
in $Q_{\lvert \tilde{\delta} \rvert}$. 
We consider the following subspace of $E_{\lvert \tilde{\delta} \rvert}$: 
\begin{equation*}
\begin{split} 
&\mathcal{T} = (f_{1}, f_{2})^{-1}(T) \cap B^{n+m}_{\varepsilon}, \ \ \
\partial \mathcal{T} = (f_{1}, f_{2})^{-1}(\partial T) \cap B^{n+m}_{\varepsilon}, \\
&\mathcal{CT} = (f_{1}, f_{2})^{-1}(Q_{\lvert \tilde{\delta} \rvert} \setminus \text{Int } T) \cap B^{n+m}_{\varepsilon}.
\end{split}
\end{equation*}
Set $\mathcal{F}_{\tilde{\delta}} = 
(f_{1}, f_{2})^{-1}(g^{-1}(\tilde{\delta}) \cap B^{4}_{\delta}) \cap B^{n+m}_{\varepsilon}$. 
Note that $E_{\lvert \tilde{\delta} \rvert} = \mathcal{CT} \cup \mathcal{T}$ and 
$\partial \mathcal{T} = \mathcal{CT} \cap \mathcal{T}$. 
We denote the corresponding zeta functions of the monodromy restricted to 
$\mathcal{T} \cap \mathcal{F}_{\tilde{\delta}}, \partial \mathcal{T} \cap \mathcal{F}_{\tilde{\delta}}$ and 
$\mathcal{CT} \cap \mathcal{F}_{\tilde{\delta}}$
by 
$\zeta_{\mathcal{T}}, \zeta_{\mathcal{CT}}$ and 
$\zeta_{\partial \mathcal{T}}$ respectively. 
Since the monodromy of $f|_{E_{\lvert \tilde{\delta} \rvert}}$ is a lifting of the monodromy of~$g$ 
which preserves $T, \partial T$ and $Q_{\lvert \tilde{\delta} \rvert} \setminus \text{Int } T$, 
we have
\[
\zeta_{f} = \zeta_{\mathcal{T}} \cdot \zeta_{\mathcal{CT}} \cdot (\zeta_{\partial \mathcal{T}})^{-1}.
\]

Since $\mathcal{CT} \cap \mathcal{F}_{\tilde{\delta}}$ is the total space of a fiber bundle 
over the path-connected base space $F_{g} \setminus \text{Int } T$ and with a fiber 
$F_{1} \times F_{2}$, 
there is a convergent $E_{2}$ cohomology spectral sequence of bigraded algebras with 
$E_{2}^{p, q} = H^{p}(F_{g} \setminus D, A^{q})$ 
and $E_{\infty}$ the bigraded algebra associated to some filtration of 
$H^{\ast}(\mathcal{CT} \cap \mathcal{F}_{\tilde{\delta}}, \Bbb{C})$ 
\cite[p. 498]{Sp}. 
Note that $F_{g} \setminus D$ is a $K(H, 1)$-space. 
If $p$ is greater than $1$, 
$H^{p}(F_{g} \setminus D, A^{q}) =
H^{p}(H, A^{q})$ is trivial \cite[p. 220]{Su}. 
We identify $H^{p}(F_{g} \setminus D, A^{q})$ with $H^{p}(H, A^{q})$ 
for $p = 0, 1$ and $q \geq 0$. 
Then the monodromy action on $H^{p}(H, A^{q})$ can be identified to 
the action in the above diagram for $p = 0, 1$ and $q \geq 0$. 
Thus $E_{2}^{p, q} = E_{\infty}^{p, q}$ and 
$\zeta_{\mathcal{CT}}$ is equal to $\prod_{q}(\zeta_{g, D})_{q}$.

By conic structures of $f_{1}^{-1}(\mathbf{0}_{2}) \cap B^{n}_{\varepsilon}$ and 
$f_{2}^{-1}(\mathbf{0}_{2}) \cap B^{m}_{\varepsilon}$, 
$(f_{1}, f_{2})^{-1}(F_{g} \cap D)$ is homotopy equivalent to 
\[
\{ \text{$n_{1}$-copies of } F_{1}\} \cup \{ \text{$n_{2}$-copies of } F_{2}\}. 
\]
Set $\tilde{H}_{j, q}$ is the monodromy of $n_{j}$-copies of $H_{q}(F_{j}, \Bbb{C})$ 
for $j = 1, 2$ and $q \geq 0$. 
The monodromy of $n_{j}$-copies of $H_{q}(F_{j}, \Bbb{C})$ 
can be identified with the cyclic permutation of $H_{j, q}$. 
Thus $\tilde{H}_{j, q}$ is equal to 
\[
\tilde{H}_{j, q} = 
\left(
        \begin{array}{@{\,}cccccccc@{\,}}
O & \ldots & \ldots & O & H_{j, q} \\
H_{j, q} & O & \ldots & \ldots & O \\ 
O & \ddots & \ddots & \ddots & \vdots \\
\vdots & \ddots & \ddots & \ddots & \vdots \\
O & \ldots & O & H_{j, q} & O
\end{array}
\right).
\]
By \cite[p. 68]{N1}, 
$\zeta_{\mathcal{T}}(\lambda) = \zeta_{f_{1}}(\lambda^{n_{2}})\zeta_{f_{2}}(\lambda^{n_{1}})$. 
Note that 
$\zeta_{\partial \mathcal{T}}$ is trivial. See \cite[p. 91]{N1}. 
Thus we obtain that 
\[
\zeta_{f}(\lambda) = \zeta_{f_{1}}(\lambda^{n_{2}})\zeta_{f_{2}}(\lambda^{n_{1}})\textstyle \prod_{q}(\zeta_{g, D})_{q}. 
\]
Since $(\zeta_{g, D})_{q}$ is determined by $h_{\text{Der}}$ and $\rho(h)$, 
by using the similarly argument of the proof of \cite[Theorem C]{N1}, 
$\Delta_{h}(\lambda)/\Delta_{\text{Der}}(\lambda)$ can be represented by 
the Alexander polynomial of $L$. 
Thus we have 
\[
(\zeta_{g, D})^{(-1)^{q}}_{q} = 
\det \Delta_{L}(\lambda^{m_{1}}E_{q,1}, \lambda^{m_{2}}E_{q,2}, 
\lambda^{m_{3}}I, \dots, \lambda^{m_{r}}I). 
\]
This completes the proof. 
\end{proof}

\begin{example}
Set $g = z_{1}z_{2}\prod_{j=1}^{k}(z_{1}^{p_{1}} + \alpha_{j}z_{2}^{p_{2}})
\prod_{j=k+1}^{k + \ell}\overline{(z_{1}^{p_{1}} + \alpha_{j}z_{2}^{p_{2}})}$. Assume that 
$\alpha_{j} \neq \alpha_{j'}$for $j \neq j'$ and $1 \leq j, j' \leq k + \ell$. 
Then $g$ is a $2$-variable strongly non-degenerate mixed polynomial which 
is locally tame along vanishing coordinate subspaces. 
By \cite{EN}, the Alexander polynomial $\Delta_{L}(\lambda_{1}, \dots, \lambda_{k+\ell + 2})$ 
is equal to 
$(\lambda_{1}^{p_{2}}\lambda_{2}^{p_{1}}\lambda_{3}^{p_{1}p_{2}}\cdots \lambda_{k+\ell+2}^{p_{1}p_{2}} - 1)^{k+\ell}$. 
Therefore the zeta function of the monodromy of $f$ is given by 
\begin{equation*}
\begin{split}
\zeta_{f}(\lambda) &= 
\textstyle \prod_{q} \det \Delta_{L}(\lambda^{m_{1}}E_{q,1}, \lambda^{m_{2}}E_{q,2}, 
\lambda^{m_{3}}I, \dots, \lambda^{m_{k+\ell+2}}I)^{(-1)^q}        \\
                   &= 
\textstyle \prod_{i, j} \det (\lambda^{p_{1} + p_{2} + p_{1}p_{2}(k-\ell)}
(H_{1, i})^{p_{2}} \otimes (H_{2,j})^{p_{1}} - I)^{(-1)^{i+j}(k+\ell)}. 
\end{split}
\end{equation*}
\end{example}

\begin{example}
Set $f_{1} : \Bbb{R}^{3} \rightarrow \Bbb{R}^{2}, f(x_{1}, x_{2}, x_{3}) = 
(x_{3}(x_{1}^{2} + x_{2}^{2} + x_{3}^{2}), x_{2} - x_{1}^{3}), 
f_{2} : \Bbb{C} \rightarrow \Bbb{C}, f_{2}(w) = w^{2}$ and 
$g : \Bbb{C}^{2} \rightarrow \Bbb{C}, g(z_{1}, z_{2}) = z_{1}^{2} + z_{2}^{3}$. 
Let $f : \Bbb{R}^{3} \times \Bbb{C} \rightarrow \Bbb{R}^{2}$ be a real analytic map which is defined by 
$f(x_{1}, x_{2}, x_{3}, w) = g(f_{1}, f_{2})(x_{1}, x_{2}, x_{3}, w)$. 
By \cite{AT}, $f_{1}$ has an isolated singularity at the origin. 
Hence $f$ also satisfies the conditions (a-i) and (a-ii). 
Note that $\zeta_{f_{1}}(\lambda) = \frac{1}{\lambda - 1}, 
\zeta_{f_{2}}(\lambda) = \frac{1}{\lambda^{2} - 1}$ and 
$\det \Delta_{L}(\lambda_{1}, \lambda_{2}, \lambda_{3}) = 
\lambda_{1}^{3}\lambda_{2}^{2}\lambda_{3}^{6} - 1$. 
By Theorem $5$, $\zeta_{f}(\lambda)$ is equal to 
\[
\frac{1}{(\lambda^{2} - 1)(\lambda^{6} - 1)}
\det \begin{pmatrix}
     \lambda^{6} - 1 & 0 \\
     0 & \lambda^{6} - 1
     \end{pmatrix} 
     = \frac{\lambda^{6} - 1}{\lambda^{2} - 1} = \lambda^{4} + \lambda^{2} + 1. 
\]

\end{example}


\end{document}